\newtheorem{thm}{Theorem}
\newtheorem{lem}{Lemma}
\newtheorem{prop}{Proposition}
\theoremstyle{definition}
\theoremstyle{remark}
\newtheorem{ob}{Observation}
\newcommand{\R}{{\mathbb{R}}}
\newcommand{\Y}{{\text{Y}}}
\begin{document}

\title{Intrinsic 3-linkedness is Not Preserved by $\Y \nabla$ moves}         % Enter your title between curly braces
\author{D. O'Donnol}        % Enter your name between curly braces
\address{Department of Mathematics, Oklahoma State University, Stillwater,  OK 74078, USA}
\email{odonnol@okstate.edu}

%    General info
\subjclass[2010]{Primary 57M15, 57M25;  Secondary 05C10}

\thanks{Supported in part by a NSF-AWM Mathematics Mentoring Travel Grant}

\date{\today}

%\keywords{}
\maketitle

\begin{abstract} This paper introduces a number of new intrinsically 3-linked graphs through five new constructions.  
We then prove that intrinsic 3-linkedness is not preserved by $\Y \nabla$ moves.  
We will see that the graph $M$, which is obtained through a $\Y \nabla$ move on $(PG)^*_*(PG)$, is not intrinsically 3-linked.  
\end{abstract}

\section{Introduction}
A graph, $G$, is \emph{intrinsically knotted} if every embedding of $G$ in $\R^3$ contains a nontrivial knot.  
A link $L$ is \emph{splittable} if there is an embedding of a 2-sphere $F$ in $\R^3\smallsetminus L$ such that  each component of $\R^3\smallsetminus F$ contains at least one
component of $L$.  
If $L$ is not splittable it is called \emph{non-split}.  
A graph, $G$, is \emph{intrinsically linked} if every embedding of $G$ in $\R^3$ contains a non-split link.  
A graph, $G$, is \emph{minor minimal with respect to being intrinsically linked} (or simply \emph{minor minimal intrinsically linked}) if $G$ is intrinsically linked and no minor of $G$ is intrinsically linked.  
The combined work of Conway and Gordon \cite{CG}, Sachs \cite{Sa}, and Robertson, Seymour, and Thomas \cite{RST} fully characterizes intrinsically linked graphs.  
The graphs in the Petersen family, shown in Figure \ref{PF},  are the complete set of minor minimal intrinsically linked graphs.  
So no minor of one of the graphs of the Petersen family is intrinsically linked, and every graph that is intrinsically linked contains one of these graphs as a minor.  
Let the set of the seven graphs of the Petersen family be denoted by $\mathcal{PF}$.  

The concept of a graph being intrinsically linked can be generalized to a graph that intrinsically contains a link of more than two components.  
A graph $G$ is \emph{intrinsically $n$-linked} if every embedding of $G$ in $\R^3$ contains a non-split $n$-component link.  
From here forward we will use \emph{$n$-link} to mean a non-split $n$-component link.  
In this paper we focus on intrinsically 3-linked graphs.  
In Section \ref{back}, we discuss the set of known intrinsically 3-linked graphs and introduce our five new constructions.  
In Section \ref{i3-l}, we prove that each of the new constructions results in an intrinsically 3-linked graph.

%%%%%%%%%%%%%%%%%%%%%%%%%%%%%%%%%%%%%%%%%%%%%%%%%%%%
\begin{figure}[h]
\begin{center}
\begin{picture}(260, 330)
\put(0,0){\includegraphics[scale=0.5]{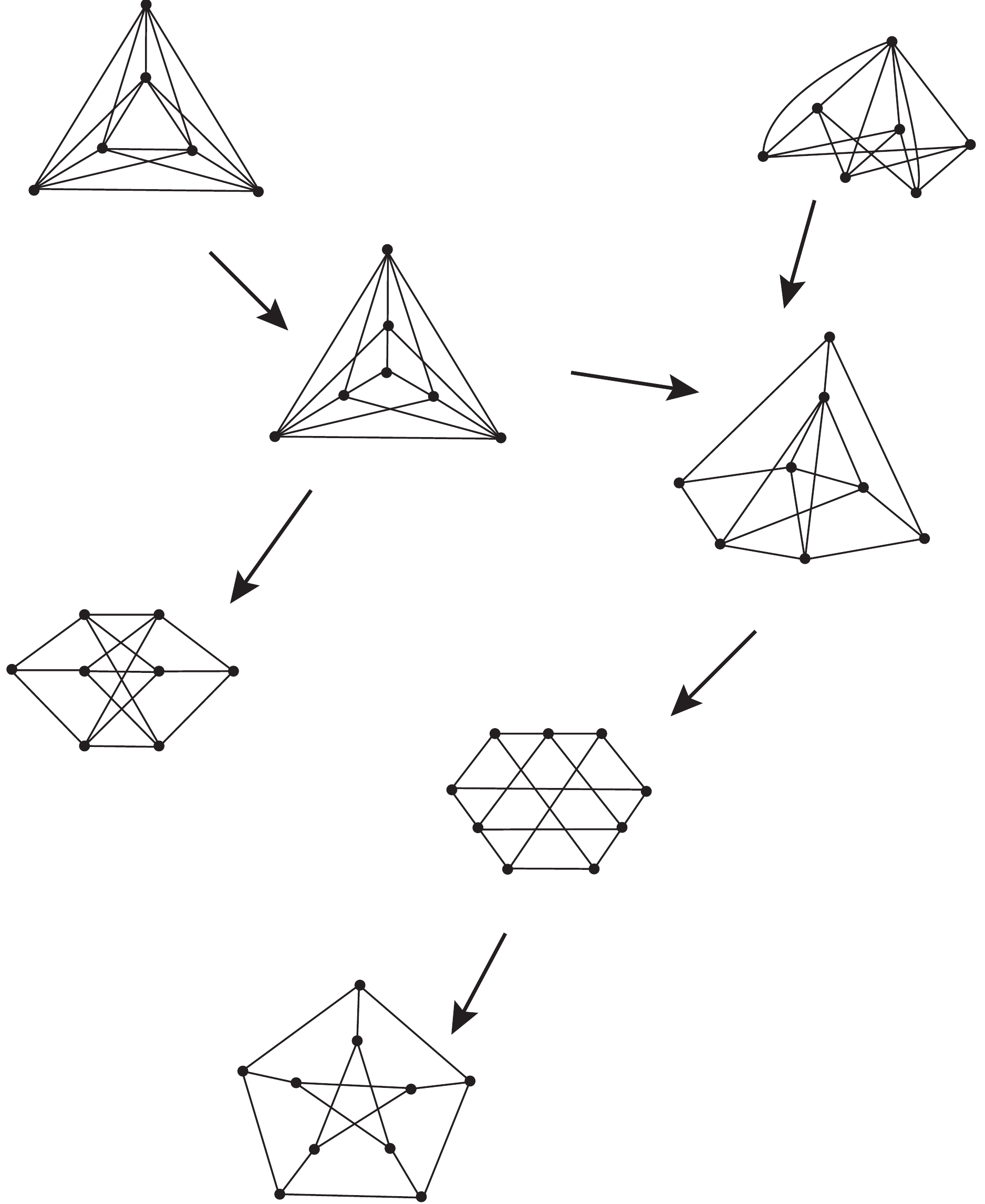}}
\put(65,305){$K_6$}
\put(190,310){$K_{3,3,1}$}
\put(125,250){$G_7$}
\put(245,220){$G_8$}
\put(180,100){$G_9$}
\put(65,130){$K_{4,4}^-$}
\put(130,15){$G_{10}=PG$}
\end{picture}
\caption{This figure shows the graphs in the Petersen family, and the arrows indicate $\nabla\Y$ moves.  }\label{PF}
\end{center}
\end{figure}
%%%%%%%%%%%%%%%%%%%%%%%%%%%%%%%%%%%%%%%%%%%%%%%%%%%%

%%%%%%%%%%%%%%%%%%%%%%%%%%%%%%%%%%%%%%%%%%%%%%%%%%%%
\begin{figure}[htpb!]
\begin{center}
\begin{picture}(200, 70)
\put(0,-10){\includegraphics{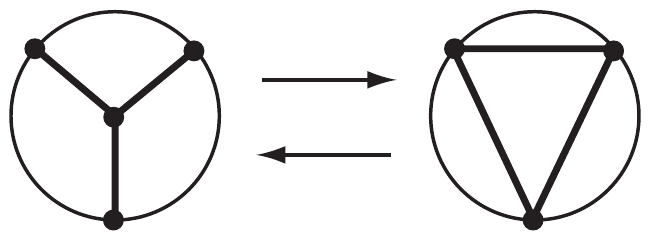}}
\put(72, 41){\small \bf$\Y \nabla$ move}
\put(72, 20){\small \bf $\nabla \Y$ move}
\end{picture}
\caption{The $\Y \nabla$ move and $\nabla\text{Y}$ move.} \label{Ytri}
\end{center}
\end{figure}

%%%%%%%%%%%%%%%%%%%%%%%%%%%%%%%%%%%%%%%%%%%%%%%%%%%%

A \emph{$\text{Y}\nabla$ move} on an abstract graph is where a valance 3 vertex, $v$, together with its adjacent edges are deleted, and three edges are added, one between each pair of vertices that had been adjacent to $v$.  
The reverse move is called a \emph{$\nabla\text{Y}$ move}.  
See Figure \ref{Ytri}.   
In \cite{Sa}, Sachs showed that each graph in the Petersen family, i.e. all those graphs obtained from $K_6$ by $\Y\nabla$ and $\nabla \Y$ moves, is also minor minimal intrinsically linked.  
Motwani, Raghunathan, and Saran \cite{MRS} showed that both intrinsic linkedness and intrinsic knottedness are preserved by $\nabla\Y$ moves.  
Their proof that intrinsic linkedness is preserved by $\nabla\Y$ moves immediately generalizes to show that intrinsic $n$-linkedness is also preserved by $\nabla\Y$ moves.  
Robertson, Seymour, and Thomas \cite{RST} showed that $\Y\nabla$ moves also preserve intrinsic linkedness.  
On the other hand, Flapan and Naimi \cite{FN} showed that $\Y\nabla$ moves do not preserve intrinsic knottedness.
It is not known if intrinsic $n$-linkedness is preserved by $\Y\nabla$ moves in general.   
The work in \cite{RST} showed that intrinsic $2$-linkedness is preserved by $\Y\nabla$ moves.  
While the family of minor minimal intrinsically linked graphs (also minor minimal intrinsically 2-linked graphs) is connected by $\Y\nabla$ and $\nabla \Y$ moves, the family of minor minimal intrinsically 3-linked graphs is not \cite{FFNP}.  
It is also known that if the graph resulting from a $\Y\nabla$ move on a minor minimal intrinsically $n$-linked graph is intrinsically $n$-linked, then it is minor minimal intrinsically $n$-linked \cite{BDLST}.  
In Section \ref{Ytri}, we prove that intrinsic 3-linkedness is not preserved by $\Y\nabla$ moves.  

\subsection*{Acknowledgements}
The author would like to thank Dorothy Buck, Erica Flapan, Kouki Taniyama and R. Sean Bowman for helpful conversations and their continued support.

%%%%%%%%%%%%%%%%%%%%%%%%%%%%%%%%%%%%%%
%%%%%%%%%%%%%%%%%%%%%%%%%%%%%%%%%%%%%%
\section{Intrinsically 3-linked graphs}\label{back}
There are a number of graphs already known to be intrinsically 3-linked.  
Figure \ref{I3L} shows all those graphs that have been shown to be intrinsically 3-linked, where no minor is known to be intrinsically 3-linked (only $G(2)$ is known to be minor minimal intrinsically 3-linked).  
In \cite{FNP}, Flapan, Naimi, and Pommersheim investigate intrinsically 3-linked graphs (or \emph{intrinsically triple linked} graphs).  
They proved that the complete graph on ten vertices, $K_{10}$ is the smallest complete graph to be intrinsically 3-linked. 
Bowlin and Foisy \cite{BF} also looked at intrinsically 3-linked graphs. 
They exhibited two different subgraphs of $K_{10}$ that are also intrinsically 3-linked, the graph resulting from removing two disjoint edges from $K_{10}$, call it $K_{10}-\{2\text{ edges}\}$, and the graph obtained by removing four edges incident to a common vertex from $K_{10}$, call it $K_{10}^*$.  
So $K_{10}$ is not minor minimal intrinsically 3-linked. 
They also described two constructions that give intrinsically 3-linked graphs, the first being the graph that results from identifying an edge of $G_1$ with an edge of $G_2$, when $G_1$ and $G_2$ are either $K_7$ or $K_{4,4}$; we will call this graph $G_1|G_2$.  
Since all of the edges of $K_7$ are equivalent as are those of $K_{4,4}$ this gives rise to three graphs.  
One of these graphs, $K_{4,4}|K_{4,4}$ (also called $J$), was previously shown to be intrinsically 3-linked in \cite{FFNP}.  
The second is the graph obtained by connecting two graphs $G_1,G_2\in \mathcal{PF}$ by a 6-cycle where the vertices of the 6-cycle alternate between $G_1$ and $G_2$.  
We will call such a graph $(G_1CG_2)_i$.  
The subscript $i$ is given because there can be multiple ways to combine the same two graphs in this way.  
Not all of the vertices of each of the graphs of the Petersen family are equivalent, so there are many different ways to form the 6-cycle.   
Thus, there will be many more than $7+{7\choose 2}=28$ such graphs that one might first expect from combining the seven different graphs of $\mathcal{PF}$ in this construction.    
Due to the large number of graphs of this form they are not drawn in Figure \ref{I3L} but instead a pictorial representation of any such graph is shown.  
Flapan, Foisy, Naimi, and Pommersheim addressed the question of minor minimal intrinsically $n$-linked graphs in \cite{FFNP}, where they constructed a family of minor minimal intrinsically $n$-linked graphs. 
The minor minimal intrinsically $3$-linked graph they constructed was called $G(2)$, shown in Figure \ref{I3L}.

%%%%%%%%%%%%%%%%%%%%%%%%%%%%%%%%%%%%%%%%%%%%%%%%%%%%
\begin{figure}[h]
\begin{center}
\begin{picture}(260, 330)
\put(0,0){\includegraphics[scale=0.7]{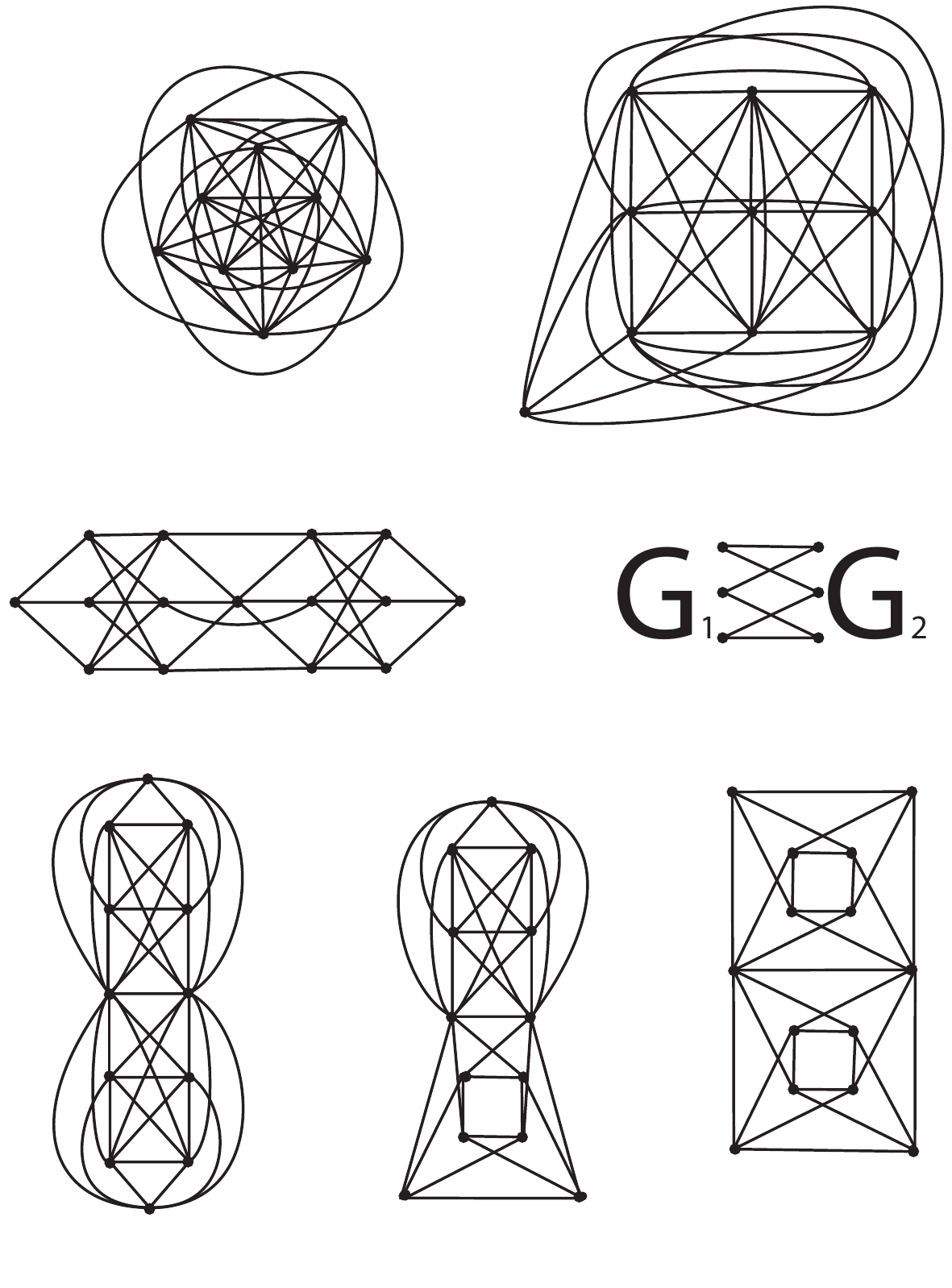}}
\put(33,215){$K_{10}-\{2\text{ edges}\}$}
\put(180,207){$K_{10}^*$}
\put(50,144){$G(2)$}
\put(175,148){$G_1CG_2$}
\put(25,7){\small\bf$K_7|K_7$}
\put(109,11){\small\bf$K_7|K_{4,4}$}
\put(178,23){\small\bf$J=K_{4,4}|K_{4,4}$}
\end{picture}
\caption{The set of of previously known intrinsically 3-linked graphs, for which no known minor is intrinsically 3-linked. (The graph $G(2)$ is known to be minor minimal.)}\label{I3L}
\end{center}
\end{figure}
%%%%%%%%%%%%%%%%%%%%%%%%%%%%%%%%%%%%%%%%%%%%%%%%%%%%

In Section \ref{i3-l}, we prove that the following five constructions give rise to intrinsically 3-linked graphs: 
Let $G_1, G_2 \in \mathcal{PF}$ and let $v_i$ be a vertex in the graph $G_i$.  
Let the set of adjacent vertices to the vertex $v_i$ be $A_i$, for $i=1, 2$.

\smallskip
{\bf Construction 1}:   Let the graph obtained by adding the edges between $v_1$ and all but one of the vertices of $A_2$ and  the edges between $v_2$ and all but one of the vertices of $A_1$ to the graphs $G_1$ and $G_2$ be called $(G_1,v_1)^*_*(G_2, v_2)$.  
We call this the \emph{double star construction}.  

\smallskip
{\bf Construction 2}:  Let the graph obtained by identifying the vertices $v_1$ and $v_2$,  adding a vertex $x$ and the edges from $x$ to all but one of the vertices in the set $A_1$ and all but one of the vertices in the set $A_2$ be called $(G_1,v_1)^*_x(G_2, v_2)$.  

\smallskip
{\bf Construction 3}:  The graph $K_{4,4}^-$ has two vertices of  valence three, label them $x$ and $y$.  
Let $G$ be one of the graphs of the Petersen family, let $v$ be one of the vertices of $G$, and let $A$ be the set of vertices adjacent to $v$ in $G$.  
Let the graph obtained by identifying the two vertices $x$ and $v$, and adding edges between $y$ and all but one of the vertices in $A$ be called $K_{4,4}(G,v)$.  

\smallskip
{\bf Construction 4}: Let the \emph{vertex identification construction} be the construction where a graph $H$ is formed by adding edges between the sets of vertices $A_1$ and $A_2$, such that between every pair of vertices of $A_1$ and pair of vertices of $A_2$ there is at least one edge joining a vertex from $A_1$ to a vertex from $A_2$ and then identifying the vertices $v_1$ and $v_2$ to get a single vertex $x$.  Let the set of added edges between $A_1$ and $A_2$ be $E_{n,m}$, where $|A_1|=n$ and $|A_2|=m$. 

\smallskip
{\bf Construction 5}: Let $V_i$ be the full vertex set of $G_i$.  Let $(G_1)^\equiv_=(G_2)$ be the graph obtained by adding five disjoint edges between $V_1$ and $V_2$ to the graphs $G_1$ and $G_2$.

The vertex is dropped from the notation for Constructions 1, 2 and 3, if the vertices of the graph $G_1$ or $G_2$ are equivalent.  
\smallskip
These constructions introduce numerous new intrinsically 3-linked graphs.  
For example, the graphs $(K_6)^*_*(K_6)$, $(K_6)^*_x(K_6)$ are both subgraphs of $K_7|K_7$.  
See Figure \ref{mm}.  
So these two graphs together with all of the graphs that can be obtained from them by $\nabla\Y$ moves were previously unknown to be intrinsically 3-linked.  
Similarly, $K_{4,4}(K_6)$  is a subgraph of $K_7|K_{4,4}$.  
So the graph $K_{4,4}(K_6)$ introduces a set of new intrinsically 3-linked graphs.  \\

%%%%%%%%%%%%%%%%%%%%%%%%%%%%%%%%%%%%%%
%%%%%%%%%%%%%%%%%%%%%%%%%%%%%%%%%%%%%
\section{New intrinsically 3-linked graphs}\label{i3-l}

In this section we prove that the five new constructions explained in Section \ref{back} give intrinsically 3-linked graphs.  
These constructions exploit some nice properties of the graphs in the Petersen family.  
\begin{ob} For each $G\in\mathcal{PF}$ every pair of disjoint cycles contains all of the vertices of $G$.  
\end{ob}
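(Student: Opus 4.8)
The plan is to verify this directly for each of the seven graphs in $\mathcal{PF}$, exploiting the fact that each has very few vertices (namely $|V(G)|\in\{6,7,8\}$ for the Petersen family) and that disjoint cycles are fairly rigid objects in such small graphs. First I would recall the vertex counts: $K_6$ has $6$ vertices, $K_{3,3,1}$ has $7$, $G_7$ has $7$, $G_8$ has $8$, $G_9$ has $8$, $K_{4,4}^-$ has $7$, and $PG=G_{10}$ (the Petersen graph) has $10$ — wait, the Petersen graph has $10$ vertices, so one must be a bit more careful. The key point is that the minimum size of two disjoint cycles already uses a large fraction of the vertices: two disjoint cycles use at least $6$ vertices total, and I would argue that in each of these graphs the complement of a pair of disjoint cycles is too small (or too sparse) to contain any leftover vertices while still leaving the cycles genuinely present as subgraphs. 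Concretely, for $K_6$, two disjoint cycles must each be a triangle, so together they contain all $6$ vertices. For the $7$-vertex graphs one checks that any two disjoint cycles must be a triangle and a $4$-cycle (since $3+3=6<7$ would leave a vertex, but a triangle plus a triangle cannot be disjoint from a seventh vertex only if... — this needs the actual adjacency structure), and so on.

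The cleanest route is probably via the Petersen family's defining property: every graph in $\mathcal{PF}$ is intrinsically linked and minor minimal so. I would use the contrapositive: if $G\in\mathcal{PF}$ had a pair of disjoint cycles $C_1\sqcup C_2$ missing some vertex $w$, then deleting $w$ (and its incident edges) would yield a proper minor of $G$ that still contains the disjoint cycles $C_1$ and $C_2$. One then wants to conclude that $G-w$ is still intrinsically linked, contradicting minor-minimality. The link here is Sachs's / Conway–Gordon's criterion: a graph is intrinsically linked precisely when it contains one of the seven Petersen-family graphs as a minor, and more classically one detects intrinsic linking of $K_6$-like graphs via the existence of two disjoint cycles realizing a nonzero linking number. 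But merely containing two disjoint cycles is not enough to force intrinsic linkedness, so this contrapositive needs strengthening: I would instead argue that $G-w$ contains a $K_6$-minor (or a Petersen-family minor) directly, using that in each of these seven graphs, deleting any single vertex leaves a graph with a minor in $\mathcal{PF}$ only if... hmm — actually the clean statement is the reverse: since $G$ is minor-minimal intrinsically linked, $G-w$ is \emph{not} intrinsically linked, and then I must derive a contradiction purely combinatorially from the presence of $C_1\sqcup C_2$ in $G-w$.

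Given the subtlety above, the honest approach — and the one I expect the author takes — is a finite case check: enumerate, for each of the seven graphs, the possible pairs of disjoint cycles up to the symmetry group of the graph, and in each case verify that every vertex is used. This is entirely mechanical because the graphs are small and highly symmetric; the symmetry groups ($S_6$ for $K_6$, etc.) cut the number of cases down dramatically. For $K_6$ it is immediate; for $K_{3,3,1}$, $G_7$, $G_8$, $G_9$, $K_{4,4}^-$ one lists the cycle lengths that can appear in a disjoint pair (constrained by $|V(G)|$) and checks each; for $PG$, whose girth is $5$, two disjoint cycles have total length at least $10=|V(PG)|$, and since the Petersen graph has exactly one "type" of pair of disjoint $5$-cycles up to automorphism, one verifies those exhaust the vertex set. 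The main obstacle is simply organizing the Petersen-graph case and the two $8$-vertex cases so that the enumeration is visibly complete; the small-graph cases are trivial, and the girth argument handles $PG$. I would present it as: "We verify the observation for each $G\in\mathcal{PF}$ in turn," and then give the short per-graph arguments, the longest being $PG$.
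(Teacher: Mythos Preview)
The paper does not actually prove this statement; it is recorded as an \emph{Observation} with no accompanying argument, left to the reader as a routine fact about the seven Petersen-family graphs. So there is no paper proof to compare your proposal against.

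Your eventual plan --- a finite case check over the seven graphs, using the girth-$5$ argument to dispatch $PG$ --- is the natural way to establish it, and is presumably what the author has in mind. A couple of corrections, though: your vertex counts are off. In the standard naming convention the subscript on $G_n$ records the number of vertices, so $G_9$ has $9$ vertices (not $8$), and $K_{4,4}^-$ is $K_{4,4}$ with one edge deleted, hence has $8$ vertices (not $7$). These errors do not break the strategy but would derail the per-graph arguments if carried through. Your attempted minor-minimality shortcut correctly stalls: the mere presence of two disjoint cycles in $G-w$ does not force intrinsic linkedness of $G-w$, so no contradiction arises that way, and you are right to abandon it in favor of the direct check.
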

Thus every embedding of a graph from the Petersen family in $\R^3$ not only contains a 2-link but contains a 2-link which contains all of the vertices of the graph.  
It is known that, for any $G\in\mathcal{PF}$, every embedding of $G$ in $\R^3$ contains a two component link with odd linking number \cite{CG, Sa}.  
So we will work with linking mod (2) and denote the mod (2) linking number of two simple closed curves $L$ and $J$ by $\omega (L,J)$.

%%%%%%%%%%%%%%%%%%%%%%%%%%%%%%%%%%%%%%%%%%%%%%%%%%%%
\begin{figure}[h]
\begin{center}
\includegraphics[scale=0.4]{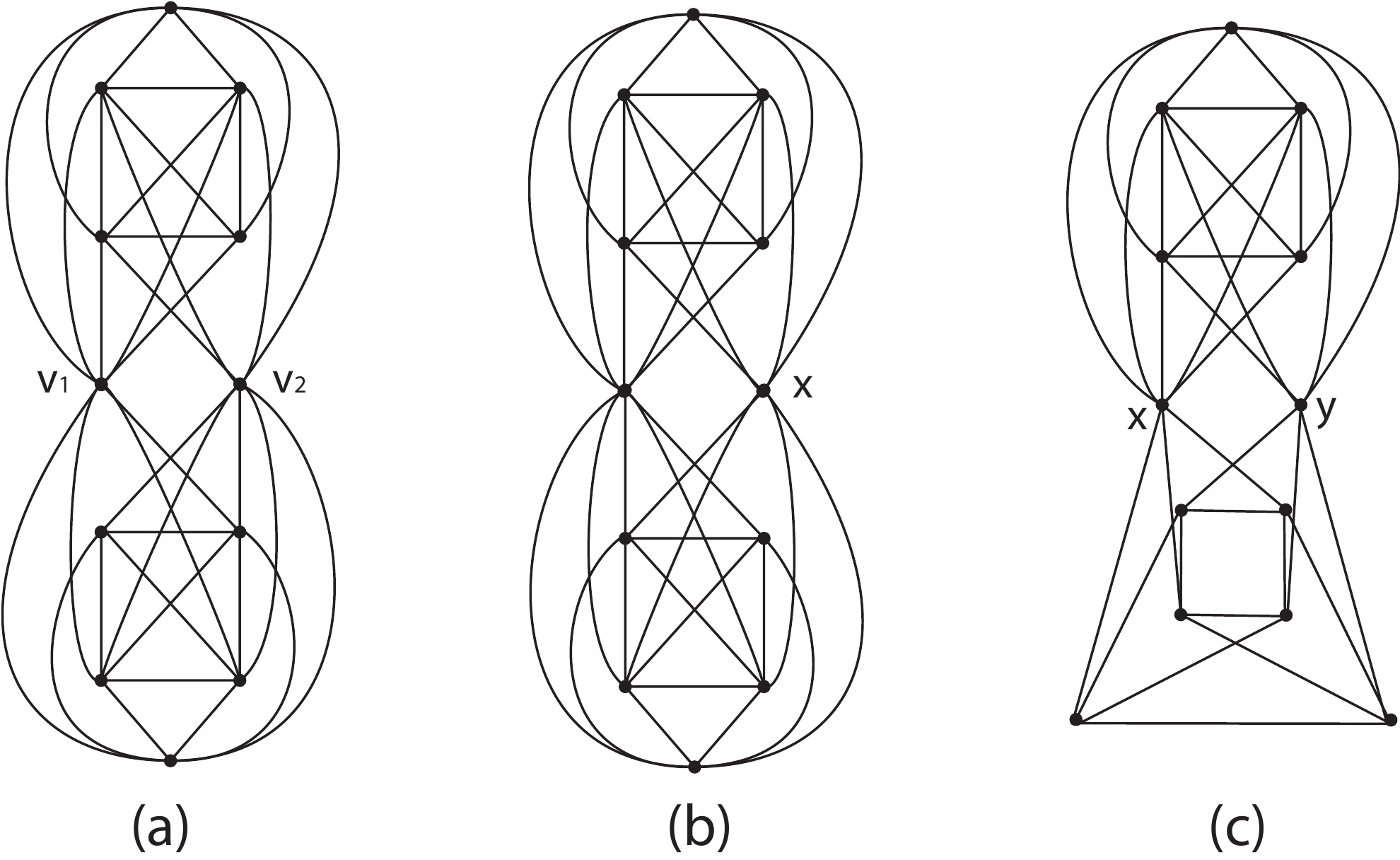}
\caption{The graphs (a) $(K_6)^*_*(K_6)$, (b) $(K_6)^*_x(K_6)$, and
 (c) $K_{4,4}(K_6)$.} \label{mm}
\end{center}
\end{figure}
%%%%%%%%%%%%%%%%%%%%%%%%%%%%%%%%%%%%%%%%%%%%%%%%%%%%
\smallskip

We will use the following lemma, proved in \cite{BF}, to prove that Construction 1 of the previous section gives rise to an intrinsically 3-linked graph.  

\begin{lem}\label{2path}
In an embedded graph with mutually disjoint simple closed
curves, $C_1$, $C_2$, $C_3,$ and $C_4$, and two disjoint paths
$x_1$ and $x_2$, such that $x_1$ and $x_2$ begin in $C_2$ and end
in $C_3$, if $\omega(C_1,C_2)=\omega(C_3,C_4)=1$ then the embedded
graph contains a non-splittable 3-component link.
\end{lem}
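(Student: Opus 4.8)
The plan is to build the required 3-component link out of the four disjoint circles $C_1,C_2,C_3,C_4$ and the two paths $x_1,x_2$ by using the paths to ``route'' cycles through $C_2$ and $C_3$ so that each resulting cycle is linked with one of $C_1$ or $C_4$. First I would observe that $x_1$ and $x_2$ together with arcs of $C_2$ and arcs of $C_3$ determine several cycles: the path $x_1$ enters $C_2$ at a point $p_1$ and $C_3$ at a point $q_1$, and similarly $x_2$ meets $C_2$ at $p_2$ and $C_3$ at $q_2$. (If a path shares more than a single point with a circle, restrict to a subpath so that it meets each of $C_2, C_3$ in exactly one point; this is where disjointness of the paths and the circles is used.) The points $p_1,p_2$ split $C_2$ into two arcs $\alpha, \alpha'$, and $q_1, q_2$ split $C_3$ into two arcs $\beta, \beta'$. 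Then $x_1 \cup \beta \cup x_2 \cup \alpha$ and the three variants obtained by swapping $\alpha \leftrightarrow \alpha'$ and $\beta \leftrightarrow \beta'$ are four cycles, each disjoint from $C_1$ and $C_4$.

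Next I would use a mod-2 counting argument. Since $\omega(C_1, C_2) = 1$, and $C_2 = \alpha \cup \alpha'$ with $\alpha, \alpha'$ meeting only at $p_1, p_2$, the cycle $\alpha \cup (\text{a fixed arc of } x_1 \cup x_2 \cup C_3)$ and the cycle $\alpha' \cup (\text{the same fixed arc})$ have mod-2 linking numbers with $C_1$ that sum to $\omega(C_1, C_2) = 1$; hence exactly one of them links $C_1$ oddly. Fixing the choice of arc on the $C_3$ side to be $\beta$, I get a cycle $D$ (of the form $x_1 \cup \beta \cup x_2 \cup \alpha^{(\prime)}$) with $\omega(C_1, D) = 1$. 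The same reasoning on the $C_3$ side, now varying $\beta \leftrightarrow \beta'$ while keeping the $C_2$-arc in $D$ fixed, shows that one of the two resulting cycles $D$, $D'$ links $C_4$ oddly; but $D$ and $D'$ differ only in the $C_3$-arc, so $\omega(C_4, D) + \omega(C_4, D') = \omega(C_4, C_3) = 1$. The delicate point is to arrange the choices so that a \emph{single} cycle is linked with $C_1$ on one ``side'' and with $C_4$ on the other; I would handle this by a short case analysis, and in the cases where no single such cycle exists, take instead the two cycles $C_1$ and the appropriate cycle-through-$C_2$, together with $C_4$, noting that connectivity through the paths still forces non-splittability.

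The cleanest way to finish is: produce three mutually disjoint cycles $J_1, J_2, J_3$ — namely $C_1$, $C_4$, and one cycle $D$ running along $x_1$ and $x_2$ and arcs of $C_2, C_3$ — such that $\omega(C_1, D) = 1$ and $\omega(C_4, D) = 1$. Then $\{C_1, D, C_4\}$ is non-split: any $2$-sphere separating it would have to separate some pair, but $\omega(C_1, D) = 1 \ne 0$ and $\omega(C_4, D) = 1 \ne 0$ force $C_1, D$ to be on the same side and $C_4, D$ to be on the same side, hence all three on the same side, a contradiction. The main obstacle is precisely the bookkeeping in the previous paragraph: showing that the arc choices on the $C_2$-side and the $C_3$-side can be made \emph{simultaneously} so that the one cycle $D$ inherits odd linking with $C_1$ from the $C_2$-side and odd linking with $C_4$ from the $C_3$-side. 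I expect this to come down to the observation that along $x_1 \cup x_2$ the two ``ends'' can be chosen independently, so the $2\times 2$ table of $(\omega(C_1, \cdot), \omega(C_4, \cdot))$ values over the four cycles has row-sums and column-sums equal to $1$ mod $2$, which guarantees an entry equal to $(1,1)$ — and when it does not, a parallel argument using one of the original circles in place of $D$ still yields a non-split triple. Since this is exactly the statement proved in \cite{BF}, I would cite that argument for the detailed case check.
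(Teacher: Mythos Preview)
The paper does not give its own proof of this lemma; it quotes the result from Bowlin--Foisy \cite{BF} and uses it as a black box. So there is no in-paper argument to compare against.

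Your sketch is essentially the standard argument and is correct in outline, but one claim needs tightening. With rows indexed by the $C_2$-arc and columns by the $C_3$-arc, the four cycles $D_{ij}$ satisfy $D_{\alpha\beta}+D_{\alpha'\beta}=C_2$ and $D_{\alpha\beta}+D_{\alpha\beta'}=C_3$ as $\mathbb{Z}/2$-cycles. Hence in the $\omega(C_1,\cdot)$ table only the \emph{column} sums are forced to equal $\omega(C_1,C_2)=1$; the row sums equal $\omega(C_1,C_3)$, which is unknown. Symmetrically, in the $\omega(C_4,\cdot)$ table only the row sums are forced to be $1$. So ``row-sums and column-sums equal to $1$'' is not literally true, and one can write down a configuration with no $(1,1)$ entry (take both tables diagonal, on opposite diagonals).

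The repair is exactly the case split you gesture at, and it is short enough to spell out. If any of $\omega(C_1,C_3)$, $\omega(C_1,C_4)$, $\omega(C_2,C_3)$, $\omega(C_2,C_4)$ equals $1$, then three of the original circles already form a non-split link. Otherwise all four cross-linking numbers vanish; then $\omega(C_1,D_{ij})$ is independent of the $\beta$-choice and $\omega(C_4,D_{ij})$ is independent of the $\alpha$-choice, so the two choices really are independent and a $(1,1)$ entry exists, giving the non-split triple $\{C_1,D,C_4\}$ as you argue. With this clarification the proof is complete; the only other tacit hypothesis you are using is that $x_1,x_2$ are disjoint from $C_1$ and $C_4$, which holds in every application in the paper.
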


\begin{prop}\label{starstar}  Let $(G_1,v_1)^*_*(G_2, v_2)$ be a graph obtained via Construction 1.  Then $(G_1,v_1)^*_*(G_2, v_2)$ is intrinsically 3-linked. 
\end{prop}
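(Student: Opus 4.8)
The plan is to start from an arbitrary embedding of $(G_1,v_1)^*_*(G_2,v_2)$ in $\R^3$ and produce the four disjoint simple closed curves and two disjoint paths required by Lemma \ref{2path}. The natural source of curves is the Observation: restricting the embedding to $G_i$ gives an embedding of a Petersen-family graph, which therefore contains a $2$-link $(L_i,J_i)$ with $\w(L_i,J_i)=1$, and moreover this pair of disjoint cycles uses \emph{every} vertex of $G_i$. In particular $v_i$ lies on one of the two cycles, say $v_1\in L_1$ and $v_2\in L_2$ (relabel if necessary). So inside $G_1$ we have disjoint cycles $J_1$ (disjoint from $v_1$) and $L_1$ (through $v_1$) with odd linking number, and likewise $J_2$, $L_2$ in $G_2$. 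These will play the roles of $C_1=J_1$, $C_2=L_1$, $C_3=L_2$, $C_4=J_2$ once we arrange the path condition.

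The key step is building the two disjoint paths from $L_1$ to $L_2$ out of the added ``double star'' edges, and then repairing the curves $L_1,L_2$ so that the paths genuinely start and end on them and everything stays disjoint. Here is where the construction is used: we added edges joining $v_1$ to all but one vertex of $A_2$ and $v_2$ to all but one vertex of $A_1$. The cycle $L_1$ passes through $v_1$, so it uses two of the edges of $G_1$ at $v_1$, i.e. it enters and leaves $v_1$ via two vertices of $A_1$; since $J_1$ also uses vertices of $A_1$ (the two disjoint cycles cover all vertices, and $v_1\notin J_1$), I need to choose the labels so that at least one neighbor of $v_1$ used by $L_1$ is available as an endpoint of an added edge from $v_2$ — because only one vertex of $A_1$ is excluded from the star, and $L_1$ uses two vertices of $A_1$ at $v_1$, at least one of them is hit by an edge from $v_2$. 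Similarly $L_2$ uses two vertices of $A_2$ at $v_2$, at least one of which is joined by an added edge to $v_1$. This gives a first path $L_1 \to a_1 \to v_2 \to L_2$, where $a_1\in A_1$ lies on $L_1$ and the edge $a_1v_2$ and the edge $v_2$-to-$L_2$ are added edges — wait, $v_2$ lies on $L_2$ already, so in fact $v_1 \to (\text{neighbor of }v_2\text{ on }L_2)$ is itself a single added edge giving a path from $v_1\in L_1$ to $L_2$, and symmetrically an added edge from $v_2$ to a neighbor of $v_1$ on $L_1$ gives a path from $L_1$ to $v_2\in L_2$. The real work is arranging two such paths simultaneously and making them vertex-disjoint from each other and from $J_1\cup J_2$; a short case analysis on which neighbor is the excluded one, using that $|A_i|\ge 3$ for every Petersen-family vertex (so there is room to avoid the one forbidden vertex), should close this.

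The main obstacle I anticipate is exactly this disjointness bookkeeping: the curves $L_1,L_2$ and the paths all want to use vertices of $A_1\cup A_2$, and one vertex in each $A_i$ is unavailable for the star edges, so a naive choice of $L_1$ or $L_2$ might force both available ``ports'' to be used up by one path, leaving no second disjoint path. I expect to handle this by observing that $G_i\setminus v_i$ still contains the two disjoint cycles $J_i$ and $L_i\setminus v_i$ (a path), that $v_i$ has at least three neighbors, and then either (i) directly exhibiting two disjoint added edges $v_1b_2$, $v_2b_1$ with $b_1$ a neighbor of $v_1$ on $L_1$ and $b_2$ a neighbor of $v_2$ on $L_2$, possibly after rerouting $L_i$ within $G_i$ to choose a convenient pair of neighbors of $v_i$ (using that a Petersen-family graph has enough cycle structure through any vertex), or (ii) if rerouting is obstructed, swapping the roles of $L_i$ and $J_i$ so that the ``inner'' cycle plays $C_2$ or $C_3$. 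Once the four curves and two disjoint paths are in hand, Lemma \ref{2path} immediately yields a non-split $3$-component link in the embedding, and since the embedding was arbitrary, $(G_1,v_1)^*_*(G_2,v_2)$ is intrinsically $3$-linked.
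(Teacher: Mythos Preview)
Your approach is correct and matches the paper's proof exactly: take the 2-links in $G_1$ and $G_2$ with $v_i$ on the inner cycle, then use two added star edges as the two disjoint paths for Lemma~\ref{2path}. The obstacle you anticipate does not exist: the two paths are the single edges $\overline{v_1 b}$ (with $b\in A_2$ a neighbor of $v_2$ on $C_3$) and $\overline{a v_2}$ (with $a\in A_1$ a neighbor of $v_1$ on $C_2$), and since $v_1\neq a$, $v_2\neq b$, and $V(G_1)\cap V(G_2)=\emptyset$, their four endpoints are automatically pairwise distinct---no rerouting, swapping, or case analysis is needed.
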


\begin{proof}  
Fix an arbitrary embedding of $(G_1,v_1)^*_*(G_2, v_2)$.  
Since $G_1$ is a graph in the Petersen family we know it must contain a 2-link and that $v_1$ must be in one of the components of the 2-link.  
Let the component that contains $v_1$ be called $C_2$ and the other component be called $C_1$.  
Note that $\omega(C_1,C_2)=1$.  
Similarly, $G_2$ must contain a 2-link and that $v_2$ must be in one of the components of the 2-link.  
Let the component that contains $v_2$ be called $C_3$ and the other component be called $C_4$, note that $\omega(C_3,C_4)=1$.  
Since $v_1$ is in $C_2$ two of the vertices adjacent to $v_1$ are also in $C_2$.  
At least one of these vertices must be adjacent to $v_2$, call it $a$.   
Note, the edge $\overline{v_1 a}$ goes between $C_2$ and $C_3$.  
Next, since $v_2$ is in $C_3$ two of the vertices adjacent to $v_2$ are also in $C_3$ and at least one of them is adjacent to $v_1$, call it $b$.  
The edge $\overline{b v_2}$ also goes between $C_2$ and $C_3$.   
Notice that $\overline{v_1 a}$ and $\overline{b v_2}$ cannot be the same edges by construction.  
Thus by Lemma \ref{2path} we see that the chosen embedding contains a 3-link, and so $(G_1,v_1)^*_*(G_2, v_2)$ is intrinsically 3-linked.  
\end{proof}

%Construction 1 introduces a number of new intrinsically 3-linked graphs among them is $(K_6)^*_*(K_6)$.  
%The graph  $(K_6)^*_*(K_6)$ is a subgraph of $K_7|K_7$.   
%So among others $(K_6)^*_*(K_6)$ together will all graphs obtained from it by $\nabla Y$ moves are new intrinsically 3-linked graph.  
%\smallskip

To prove Proposition \ref{con2}, we will use the following lemma which appears in \cite{FFNP}:

\begin{lem}\label{3Lpath}
Suppose that $G$ is a graph embedded in $\R^3$ and contains the simple closed curves $C_1, C_2, C_3,$ and $C_4$. 
Suppose that $C_1$ and $C_4$ are disjoint from each other and both are disjoint from $C_2$ and $C_3$, and that $C_2$ and $C_3$ intersect in precisely one vertex $x$. 
Also, suppose there are vertices $u\neq x$ in $C_2$ and $v\neq x$ in $C_3$ and a path P in $G$ with endpoints $u$ and $v$ whose interior is disjoint from each $C_i$.
 If $\omega(C_1,C_2)=\omega(C_3,C_4)=1$, then there is a non-splittable 3-component link in $G$.
\end{lem}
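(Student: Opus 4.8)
The plan is to locate, inside the subgraph $\Gamma := C_2 \cup C_3 \cup P$, a single simple closed curve $J$ whose mod $2$ linking number with each of $C_1$ and $C_4$ is $1$. Since $C_1$ and $C_4$ are disjoint from one another and from $\Gamma$, the triple $\{C_1, J, C_4\}$ will then be three pairwise disjoint simple closed curves with $\mathrm{lk}(C_1, J)$ and $\mathrm{lk}(C_4, J)$ both odd, hence nonzero; and such a link is non-split, because a splitting $2$-sphere would (having exactly two complementary regions while the link has three components) put one component alone on one side and the other two on the other, so in every case it would separate $J$ from at least one of $C_1, C_4$ --- impossible for curves of nonzero linking number. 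So the whole problem reduces to producing $J$.

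First I would record the combinatorics of $\Gamma$. The vertex $x$ splits the circle $C_2$ into two arcs $\alpha_1, \alpha_2$ from $x$ to $u$ and splits $C_3$ into two arcs $\beta_1, \beta_2$ from $x$ to $v$; since $u, v \neq x$ and the interior of $P$ misses $C_2 \cup C_3$, the path $P$ meets $\Gamma \setminus P$ only at its endpoints $u$ and $v$. Consequently the only simple closed curves contained in $\Gamma$ are $C_2$, $C_3$, and the four curves $D_{ij} := \alpha_i \cup P \cup \beta_j$ for $i, j \in \{1,2\}$, each of which is genuinely embedded. In the $\mathbb{Z}/2$ cycle space $Z$ of the graph $\Gamma$, which has dimension $3$ with basis $\{C_2, C_3, D_{11}\}$, these six curves are exactly the six nonzero elements other than $C_2 + C_3$ (one has $D_{12} = C_3 + D_{11}$, $D_{21} = C_2 + D_{11}$, $D_{22} = C_2 + C_3 + D_{11}$).

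Next I would use that mod $2$ linking number with a fixed disjoint simple closed curve is additive over $\mathbb{Z}/2$ cycles (it is the mod $2$ intersection count with a bounding orientable surface). Since $\Gamma$ is disjoint from $C_1$ and from $C_4$, this makes $z \mapsto \omega(C_1, z)$ and $z \mapsto \omega(C_4, z)$ into linear functionals $f_1, f_4 : Z \to \mathbb{Z}/2$. The hypotheses give $f_1(C_2) = \omega(C_1, C_2) = 1$ and $f_4(C_3) = \omega(C_3, C_4) = 1$, so $f_1$ and $f_4$ are nonzero and the system $f_1(z) = f_4(z) = 1$ is consistent; its solution set $S$ is a nonempty coset of $\ker f_1 \cap \ker f_4$, so $|S| \in \{2,4\}$, and $0 \notin S$. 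Since the only nonzero element of $Z$ that is not one of our six embedded curves is $C_2 + C_3$, at least one element $J \in S$ is one of $C_2, C_3, D_{11}, D_{12}, D_{21}, D_{22}$. This $J$ is the curve sought, and $\{C_1, J, C_4\}$ is the required non-split $3$-component link.

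The one point that needs care, rather than being a genuine obstacle, is the first step: verifying that the six listed curves are all the simple closed curves in $\Gamma$ and that each $D_{ij}$ is embedded, which rests exactly on $u, v \neq x$ and on $P$ being internally disjoint from $C_2 \cup C_3$ --- drop either and $P$ could re-enter $C_2$ or $C_3$ and the cycle structure changes. Once that and the linearity of $\omega(\,\cdot\,,\,\cdot\,)$ over the cycle space are set up, the counting in $(\mathbb{Z}/2)^3$ and the splitting-sphere argument are both routine.
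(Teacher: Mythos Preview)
Your argument is correct. Note, however, that the paper itself does not prove this lemma: it is quoted from \cite{FFNP} and invoked as a black box in the proofs of Propositions~\ref{con2} and~\ref{VIdI3L}, so there is no in-paper proof to compare against directly.

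For context, the argument in \cite{FFNP} proceeds by case analysis on the unknown mod~$2$ linking numbers: one runs through the possibilities and in each case exhibits by hand a simple closed curve $J\subset C_2\cup C_3\cup P$ with $\omega(C_1,J)=\omega(C_4,J)=1$. Your version is a linear-algebra repackaging of the same idea. The key observation --- that the $\mathbb{Z}/2$ cycle space of $\Gamma=C_2\cup C_3\cup P$ is three-dimensional with exactly seven nonzero elements, only one of which (the figure-eight $C_2+C_3$) fails to be embedded --- lets you replace the case split by a single counting step: the affine solution set of $f_1=f_4=1$ has at least two elements and misses $0$, hence must meet the six embedded cycles. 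This is essentially the same proof in different clothing, but it is tidier and it makes the role of the hypotheses $u,v\neq x$ and ``interior of $P$ disjoint from each $C_i$'' transparent, since those are exactly the conditions that force $\Gamma$ to have first Betti number $3$ with the cycle structure you describe.
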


\begin{prop}\label{con2}  Let $(G_1,v_1)^*_x(G_2, v_2)$ be a graph obtained via Construction 2, then $(G_1,v_1)^*_x(G_2, v_2)$ is intrinsically 3-linked. 
\end{prop}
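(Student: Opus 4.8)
The plan is to mirror the proof of Proposition \ref{starstar}, but using Lemma \ref{3Lpath} in place of Lemma \ref{2path}, since Construction 2 produces two cycles meeting at a single vertex rather than two disjoint cycles joined by two paths. Fix an arbitrary embedding of $(G_1,v_1)^*_x(G_2,v_2)$. Recall that in this construction the vertices $v_1$ and $v_2$ have been identified into a single vertex, which I will also call $v$, and a new vertex $x$ has been added together with edges from $x$ to all but one vertex of $A_1$ and all but one vertex of $A_2$. First I would apply the Observation and the Conway--Gordon--Sachs result to the copy of $G_1$: it contains a $2$-link, one of whose components contains $v$; call that component $C_2$ and the other $C_1$, so $\w(C_1,C_2)=1$. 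Likewise the copy of $G_2$ contains a $2$-link, one component of which contains $v$; call it $C_3$ and the other $C_4$, so $\w(C_3,C_4)=1$. Since $C_1$ and $C_4$ lie in $G_1$ and $G_2$ respectively and $v$ is the only vertex shared by $G_1$ and $G_2$, while $v\notin C_1$ and $v\notin C_4$, the curves $C_1$ and $C_4$ are disjoint from each other and from $C_2$ and $C_3$; and $C_2\cap C_3$ is precisely the vertex $v$.

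Next I would produce the path $P$ required by Lemma \ref{3Lpath}, running from a vertex $u\ne v$ of $C_2$ to a vertex $w\ne v$ of $C_3$ through the new vertex $x$. Because $v\in C_2$ and $C_2$ is a cycle in $G_1$, exactly two vertices of $A_1$ lie on $C_2$; since $x$ is joined to all but one vertex of $A_1$, at least one of these two, call it $u$, is adjacent to $x$. Symmetrically, exactly two vertices of $A_2$ lie on $C_3$, and at least one of them, call it $w$, is adjacent to $x$. Then $P = \overline{u\,x}\cup\overline{x\,w}$ is a path with endpoints $u\in C_2\setminus\{v\}$ and $w\in C_3\setminus\{v\}$. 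Its interior is the single vertex $x$, which is disjoint from every $C_i$ because $x\notin G_1$ and $x\notin G_2$ while each $C_i$ lies in $G_1$ or $G_2$. Applying Lemma \ref{3Lpath} with this data gives a non-splittable $3$-component link in the embedding, so the graph is intrinsically $3$-linked.

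The only subtlety — and the place I would be most careful writing the details — is the degenerate possibility $u=w$, which would make $P$ not a genuine path between distinct vertices. This can happen only if $C_2$ and $C_3$ share, besides $v$, a common vertex of $A_1\cap A_2$; but $C_2\cap C_3=\{v\}$ as already noted, so $u\ne w$ automatically. One should also check the edge case where the two $C_2$-vertices of $A_1$ include the unique vertex of $A_1$ that $x$ is \emph{not} joined to: then $u$ is forced to be the other one, which is still fine since at least one of the two is available; the same remark applies on the $G_2$ side. With these points noted the argument is complete.

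\begin{proof}
Fix an arbitrary embedding of $(G_1,v_1)^*_x(G_2, v_2)$, and write $v$ for the vertex obtained by identifying $v_1$ and $v_2$. Since $G_1\in\mathcal{PF}$, the Observation together with the results of \cite{CG, Sa} guarantee that $G_1$ contains a $2$-link, one component of which must contain every vertex of $G_1$ meeting the other; in particular some component contains $v$. Call the component containing $v$ by $C_2$ and the other by $C_1$; then $\w(C_1,C_2)=1$. Similarly $G_2$ contains a $2$-link; let $C_3$ be the component containing $v$ and $C_4$ the other, so that $\w(C_3,C_4)=1$. Since $G_1$ and $G_2$ meet only in the vertex $v$, and $v\notin C_1$, $v\notin C_4$, the curves $C_1$ and $C_4$ are disjoint from one another and from both $C_2$ and $C_3$, while $C_2\cap C_3=\{v\}$.

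Because $v\in C_2$ and $C_2$ is a cycle in $G_1$, exactly two vertices of $A_1$ lie on $C_2$. The vertex $x$ is adjacent to all but one vertex of $A_1$, so at least one of these two lies in the neighborhood of $x$; call it $u$. Likewise, exactly two vertices of $A_2$ lie on the cycle $C_3\subset G_2$, and at least one of them is adjacent to $x$; call it $w$. Since $C_2\cap C_3=\{v\}$ and $u\ne v\ne w$, we have $u\ne w$. Let $P=\overline{u\,x}\cup\overline{x\,w}$. This is a path in $(G_1,v_1)^*_x(G_2, v_2)$ with endpoints $u\in C_2$ and $w\in C_3$, and its interior consists of the single vertex $x$, which lies on none of $C_1,C_2,C_3,C_4$ since $x$ belongs to neither $G_1$ nor $G_2$. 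As $\w(C_1,C_2)=\w(C_3,C_4)=1$, Lemma \ref{3Lpath} applies and yields a non-splittable $3$-component link in the embedding. Hence $(G_1,v_1)^*_x(G_2, v_2)$ is intrinsically $3$-linked.
\end{proof}
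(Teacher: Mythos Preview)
Your proof is correct and follows essentially the same approach as the paper's: both arguments locate the $2$-links in $G_1$ and $G_2$ with the identified vertex in $C_2$ and $C_3$, build the path $P=\overline{u\,x}\cup\overline{x\,w}$ through the added vertex $x$, and invoke Lemma~\ref{3Lpath}. If anything, you are slightly more careful than the paper in explicitly verifying the hypotheses $C_2\cap C_3=\{v\}$, $u\neq w$, and the disjointness of the interior of $P$ from the $C_i$.
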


\begin{proof} Let $A_i$ be the sets vertices and $x$ be the vertex as described in Construction 2, in the previous section.   Fix an arbitrary embedding of $(G_1,v_1)^*_x(G_2, v_2)$.  Since $G_1$ is in the Petersen family we know it must contain a 2-link and that $v_1$ must be in one of the 
components of the 2-link.  Let the component that contains $v_1$ be called $C_2$ and the other component be called $C_1$; note that $\omega(C_1,C_2)=1$.  
Similarly, $G_2$ must contain a 2-link and that $v_2$ must be in one of the components of the 2-link.  
Let the component that contains $v_2$ be called $C_3$ and the other component be called $C_4$; note that $\omega(C_3,C_4)=1$.  
Since $v_1$ is in $C_2$ there are two vertices in $A_1$ that are also in $C_2$ and at least one of them is adjacent to $x$.  Call it $a_i$.  
Similarly, since $v_2$ is in $C_3$ there are two vertices of $A_2$ that are also in $C_3$ and at least one of them is adjacent to $x$.  Call it $b_i$.    
Let the path consisting of the two edges $\overline{a_ix}$ and $\overline{xb_i}$ be called $P$.  The path $P$ goes from $C_2$ to $C_3$.  
So by Lemma \ref{3Lpath} we see that the embedding contains a 3-link.  Thus $(G_1,v_1)^*_x(G_2, v_2)$ is intrinsically 3-linked.  
\end{proof}

The graph $K_{4,4}(K_6)$ is shown in Figure \ref{mm}.  We will use the following lemmas in the proof of the next proposition about Construction 3.
See Section \ref{back} for the constructions.    

\begin{lem}\label{k44}
\cite{Sa} Let $K_{4,4}$ be embedded in $\R^3$, then every edge of
$K_{4,4}$ is in a component of a 2-link.
\end{lem}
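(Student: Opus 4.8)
The statement to prove is Lemma~\ref{k44}, attributed to Sachs: every edge of an embedded $K_{4,4}$ lies in a component of a $2$-link. Since this is cited from \cite{Sa}, I would give the standard argument rather than a deep new proof, and the plan below reflects that.

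The plan is to reduce the claim to the fact that $K_{4,4}^-$ (equivalently, a suitable minor or subgraph arrangement inside $K_{4,4}$) is intrinsically linked, and then exploit the rich symmetry of $K_{4,4}$. First I would fix an embedding of $K_{4,4}$ and an arbitrary edge $e = \overline{a_1 b_1}$, where $\{a_1, a_2, a_3, a_4\}$ and $\{b_1, b_2, b_3, b_4\}$ are the two sides of the bipartition. The goal is to produce two disjoint cycles $C$ and $C'$ with $\omega(C, C') = 1$ such that $e \subset C$. Since $K_{4,4}$ is edge-transitive (its automorphism group acts transitively on edges), it suffices in principle to handle one edge, but the cleaner route is to find, for the chosen $e$, a subgraph of $K_{4,4}$ that forces a $2$-link through $e$.

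The key step is the following: delete from $K_{4,4}$ the vertex $a_1$'s "partner structure" appropriately, or better, observe that $K_{4,4}$ with the edge $e$ distinguished contains a copy of $K_{3,3,1}$ or of $K_{4,4}^-$ (a Petersen-family graph) arranged so that $e$ must sit on one component of its guaranteed $2$-link. Concretely, I would look at the subgraph on $\{a_1,a_2,a_3,a_4,b_1,b_2,b_3\}$ — that is, $K_{4,4}$ minus the vertex $b_4$ and minus one further edge to land exactly in $\mathcal{PF}$ — and use Observation~1 from the excerpt: for any $G \in \mathcal{PF}$, every pair of disjoint cycles realizing the $2$-link uses all vertices of $G$. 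Because $a_1$ and $b_1$ are both in this vertex set, they lie on the (possibly different) cycles $C, C'$ of the forced link. Then I would argue that one can reroute so that $e = \overline{a_1 b_1}$ is actually traversed: if $a_1 \in C$ and $b_1 \in C'$, I would use an exchange/summing argument on linking numbers mod $2$ — adding the edge $e$ together with an arc in the complementary cycle — to convert the $2$-link into one of the same parity of linking number with $e$ on one component. The cleanest incarnation of this is to choose the Petersen-family subgraph so that $a_1$ and $b_1$ are already forced onto the \emph{same} cycle, which can be done by a careful choice of which vertex and which edge to delete, using the fact that in $K_{4,4}^-$ the two valence-$3$ vertices play a special role and can be made to be (or to be adjacent to) $a_1$ and $b_1$.

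The main obstacle I expect is the rerouting/parity bookkeeping: guaranteeing that the distinguished edge $e$ lies on a component, rather than merely that its two endpoints lie on the link, requires either a clever choice of the Petersen-family subgraph (so that an automorphism of $K_{4,4}$ carries a known "edge-on-a-cycle" situation to $e$) or a linking-number additivity argument modulo $2$ to swap $e$ onto a cycle while preserving oddness of the linking number. Since the paper only needs this as a black-box input to the proof of its Construction~3 proposition, and it is explicitly credited to \cite{Sa}, I would keep the write-up short: state that it follows from edge-transitivity of $K_{4,4}$ together with the fact that $K_{4,4}^-$ (and hence $K_{4,4}$, which contains it) is intrinsically linked with the $2$-link using all vertices, and refer the reader to \cite{Sa} for the detailed parity computation.
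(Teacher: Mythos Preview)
The paper does not prove this lemma at all: it is stated with the citation \cite{Sa} and immediately used as a black box in the proof of the Construction~3 proposition. Your final recommendation---keep the write-up short and refer the reader to \cite{Sa}---is exactly what the paper does, so there is nothing to compare.

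Regarding your sketch itself: the gap you flag is genuine, and your proposed patches do not quite close it. Passing to a $K_{4,4}^-$ subgraph and invoking Observation~1 only puts the two endpoints $a_1,b_1$ of $e$ on the $2$-link, not $e$. One helpful fact you do not exploit is that any $4$-cycle in $K_{4,4}$ containing both $a_1$ and $b_1$ is a $K_{2,2}$ on $\{a_1,a_i,b_1,b_j\}$ and hence automatically contains the edge $\overline{a_1b_1}$; so it would suffice to force $a_1$ and $b_1$ onto the \emph{same} component of the link. But deleting a single edge to land in $K_{4,4}^-$ does not guarantee this (in $K_{4,4}^-$ there are pairs of disjoint $4$-cycles with $a_1,b_1$ separated), and your ``rerouting/parity'' suggestion is not fleshed out. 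Sachs's actual argument in \cite{Sa} is a direct mod~$2$ linking-number sum over a family of disjoint $4$-cycle pairs, which is what you would need to reproduce if you wanted a self-contained proof rather than a citation.
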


\begin{lem}\label{3l}
\cite{FNP} Suppose that $G$ is a graph embedded in $\R^3$ that
contains the simple closed curves $C_1, C_2, C_3,$ and $C_4$.
Suppose that $C_1$ and $C_4$ are disjoint from each other and both
are disjoint from $C_2$ and $C_3$, and that $C_2\cap C_3$ is an
arc.  If $\omega(C_1,C_2)=1$ and $\omega(C_3,C_4)=1$, then there
is a non-split 3-component link in $G$.
\end{lem}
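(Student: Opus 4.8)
The plan is to produce three pairwise disjoint simple closed curves in $G$ forming a non-split link, built out of $C_1$, $C_4$, and a third curve carved from $C_2\cup C_3$. Write $\alpha=C_2\cap C_3$ for the common arc, with (distinct) endpoints $a$ and $b$, and decompose $C_2=\alpha\cup\beta_2$ and $C_3=\alpha\cup\beta_3$, where $\beta_2,\beta_3$ are the complementary arcs from $a$ to $b$. Since $C_2\cap C_3=\alpha$, the interiors of $\beta_2$ and $\beta_3$ are disjoint, so $C_0:=\beta_2\cup\beta_3$ is again a simple closed curve in $G$, contained in $C_2\cup C_3$. As $\mathbb{Z}/2$ chains one has $C_2+C_3+C_0=0$, since each edge of $\alpha$, $\beta_2$, $\beta_3$ lies in exactly two of the three cycles. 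Now $C_1$ and $C_4$ are each disjoint from $C_2$ and from $C_3$, hence also from $C_0$; since linking number mod $2$ with a fixed curve $C$ factors through $H_1(\R^3\smallsetminus C;\mathbb{Z}/2)\cong\mathbb{Z}/2$, it is additive on cycles disjoint from $C$, giving
\[
\w(C_1,C_2)+\w(C_1,C_3)+\w(C_1,C_0)=0,\qquad \w(C_4,C_2)+\w(C_4,C_3)+\w(C_4,C_0)=0.
\]

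Next comes a short case analysis to single out one of $C_2$, $C_3$, $C_0$ that links both $C_1$ and $C_4$ mod $2$. Plugging $\w(C_1,C_2)=1$ into the first relation shows exactly one of $\w(C_1,C_3)$, $\w(C_1,C_0)$ is $1$, and plugging $\w(C_4,C_3)=1$ into the second shows exactly one of $\w(C_4,C_2)$, $\w(C_4,C_0)$ is $1$. If $\w(C_1,C_3)=1$, take $D=C_3$ (already linked with $C_4$). Otherwise $\w(C_1,C_0)=1$; then take $D=C_2$ if $\w(C_4,C_2)=1$ ($C_2$ being already linked with $C_1$), and take $D=C_0$ if instead $\w(C_4,C_0)=1$. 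In every case $D$ is a simple closed curve in $G$ with $\w(C_1,D)=\w(C_4,D)=1$, and since $D\subseteq C_2\cup C_3$ it is disjoint from $C_1$ and from $C_4$, which are themselves disjoint; thus $L=C_1\cup D\cup C_4$ is a genuine $3$-component link.

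Finally I would verify that $L$ is non-split: if a $2$-sphere $F$ disjoint from $L$ split it, then $D$ lies in one component of $\R^3\smallsetminus F$ and the other component contains at least one of $C_1$, $C_4$; that curve is then separated from $D$ by $F$, so its linking number with $D$ is $0$, contradicting $\w(C_1,D)=\w(C_4,D)=1$. Hence $G$ contains a non-split $3$-component link. I expect no real obstacle here: the argument reduces to the $\mathbb{Z}/2$ additivity of the linking number and the non-splitness of a three-component chain whose middle component links each of the other two. The only points needing a little care are that $\alpha$ is a nondegenerate arc, so that $C_0$ is honestly a simple closed curve, and the check that the selected triple of curves is pairwise disjoint.
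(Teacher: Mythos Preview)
Your argument is correct and is essentially the standard proof of this lemma as it appears in \cite{FNP}: form the third cycle $C_0$ from the two complementary arcs, use $\mathbb{Z}/2$-additivity of linking to find one of $C_2,C_3,C_0$ linking both $C_1$ and $C_4$, and conclude that the resulting chain is non-split. The present paper does not supply its own proof of this lemma---it is quoted from \cite{FNP}---so there is nothing further to compare against.
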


\begin{prop} Let $K_{4,4}(G,v)$ be a graph obtained via Construction 3, then $K_{4,4}(G,v)$ is intrinsically 3-linked. 
\end{prop}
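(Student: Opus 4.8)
The plan is to follow the same template used in the proofs of Propositions \ref{starstar} and \ref{con2}, now invoking Lemmas \ref{k44} and \ref{3l}. Fix an arbitrary embedding of $K_{4,4}(G,v)$. The graph $K_{4,4}(G,v)$ contains $K_{4,4}^-$ with its two valence-three vertices $x$ and $y$, and $x$ has been identified with $v\in G$. Since $K_{4,4}^-$ is itself in the Petersen family, the copy of $K_{4,4}^-$ in our embedding contains a 2-link, and (by the Observation) the identified vertex $x=v$ lies on one of its two components; call that component $C_2$ and the other $C_1$, so $\w(C_1,C_2)=1$. Likewise $G$ is in the Petersen family, so its image contains a 2-link one of whose components passes through $v=x$; call it $C_3$ and the other $C_4$, so $\w(C_3,C_4)=1$. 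Then $C_2$ and $C_3$ both pass through the single vertex $x=v$, and the first step is to observe that $C_1,C_4$ are disjoint from each other and from $C_2,C_3$ (here one uses that $C_1$ lies in the $K_{4,4}^-$ side away from $v$ and $C_4$ lies in the $G$ side away from $v$, while the only shared vertex of the two sides is $v$ itself).

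The issue is that Lemma \ref{3l} requires $C_2\cap C_3$ to be an \emph{arc}, not a single vertex, so I must produce a nontrivial overlap arc rather than just the point $x$. This is where the valence-three vertex $y$ of $K_{4,4}^-$ and the extra edges of Construction 3 enter. Recall Construction 3 adds edges from $y$ to all but one of the vertices of $A$ (the neighbors of $v$ in $G$). Since $v$ lies on $C_3$, two of its neighbors in $A$ lie on $C_3$; at least one of them, call it $a$, is joined to $y$ by one of the added edges. So $y$ is adjacent to a point $a\in C_3$. I would then use Lemma \ref{k44} (applied to the $K_{4,4}$ obtained from $K_{4,4}^-$ by restoring the missing edge $\overline{xy}$, or more carefully working inside $K_{4,4}^-$ together with the structure around $y$) to arrange that $y$ lies on a cycle $C_2'$ in the $K_{4,4}^-$-part with $\w(C_1,C_2')=1$ and such that $C_2'$ and $C_3$ share the arc $\overline{a y}\cup\overline{y\,?}$; concretely, I want a curve on the $K_{4,4}^-$ side through both $x=v$ and $y$ so that, after appending the edge $\overline{ya}$, it meets $C_3$ in the arc from $v$ along $C_3$ to $a$ and back through $y$. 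The cleanest route: pick the 2-link of $K_{4,4}^-$ so that the component containing $v$ also contains $y$ (possible because $x$ and $y$ together lie on a common cycle of $K_{4,4}^-$ whose complementary cycle links it oddly — this needs to be checked from the structure of $K_{4,4}^-$, or deduced from Lemma \ref{k44} applied to an edge incident to $y$), call this component $C_2$; then $C_2\cup\overline{ya}$ contains an arc shared with $C_3$, namely the portion of $C_3$ between $v$ and $a$ can be rerouted, so after adjusting we get $C_2$ and $C_3$ overlapping in a genuine arc.

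With $C_1,C_2,C_3,C_4$ in hand satisfying the hypotheses of Lemma \ref{3l} — $C_1,C_4$ disjoint from each other and from $C_2,C_3$, the curves $C_2,C_3$ meeting in an arc, and $\w(C_1,C_2)=\w(C_3,C_4)=1$ — the lemma immediately yields a non-split 3-component link in the embedding, so $K_{4,4}(G,v)$ is intrinsically 3-linked, completing the proof.

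The main obstacle, as indicated, is the second step: one cannot apply Lemma \ref{3l} directly because $C_2$ and $C_3$ a priori meet only in the single vertex $v=x$, so the real content is showing that the valence-three vertex $y$ together with the added edges from $y$ to $A$ lets one enlarge the overlap to an arc while preserving the odd linking number $\w(C_1,C_2)=1$. I expect this to require a careful, case-checked argument about which cycles of $K_{4,4}^-$ pass through both $x$ and $y$ and link the remaining cycle oddly; once that structural fact about $K_{4,4}^-$ is pinned down, the rest is a routine assembly of the pieces.
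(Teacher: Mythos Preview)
Your approach has a genuine gap, and the specific fix you propose actually fails. You correctly identify the obstacle: after taking a $2$-link in $K_{4,4}^-$ and a $2$-link in $G$, the two ``inner'' components $C_2$ and $C_3$ meet only at the single vertex $x=v$, whereas Lemma \ref{3l} requires an arc of overlap. Your remedy is to choose the $2$-link in $K_{4,4}^-$ so that the component $C_2$ through $x$ also contains $y$. But this is impossible. In $K_{4,4}^-$ (bipartite, with $x$ and $y$ in opposite parts and the edge $\overline{xy}$ removed) every pair of disjoint cycles consists of two $4$-cycles covering all eight vertices, and any $4$-cycle through both $x$ and $y$ would have to use the missing edge $\overline{xy}$. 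Hence in every $2$-link of $K_{4,4}^-$ the vertices $x$ and $y$ lie on \emph{different} components, and your ``cleanest route'' cannot be taken. The vaguer alternative you sketch---rerouting part of $C_3$ through $\overline{ya}$ while preserving $\omega(C_1,C_2)=1$---is not an argument as it stands, and the structural fact you hope to check about $K_{4,4}^-$ is exactly the one that is false.

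The paper's proof avoids this by reversing the roles and exploiting Lemma \ref{k44} in a sharper way. It first takes the $2$-link $C_1\cup C_2$ in $G$ with $x=v\in C_2$, finds a neighbor $a\in A\cap C_2$ adjacent to $y$, and sets $P=\overline{xa}\cup\overline{ay}$. The key observation is that $K_{4,4}^-$ together with $P$ is a \emph{subdivision of $K_{4,4}$} (the path $P$ plays the role of the missing edge $\overline{xy}$). Lemma \ref{k44} then guarantees that this particular ``edge'' $P$ lies on a component $C_3$ of some $2$-link $C_3\cup C_4$ in that $K_{4,4}$. Since $\overline{xa}\subset P\subset C_3$ and $\overline{xa}\subset C_2$, the overlap $C_2\cap C_3=\overline{xa}$ is an arc by construction, and Lemma \ref{3l} applies directly---no case analysis is needed. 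The point you were missing is that Lemma \ref{k44} lets you \emph{prescribe} which edge of $K_{4,4}$ appears in the link, and the freedom in Construction 3 lets you manufacture that edge so that it already shares an arc with $C_2$.
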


\begin{proof}Let $x$, $y$, and $A$ be as described in Construction 3.  Fix an arbitrary embedding of $K_{4,4}(G,v)$.  
Since $G$ is one of the graphs from the Petersen family it contains a 2-link $C_1\cup C_2$ which contains the vertex $v=x$, without loss of generality let $x$ be in $C_2$.  
Since $x$ is in $C_2$ two of the vertices adjacent to $x$, are also in $C_2$.  
So at least one of these vertices in $C_2$ is adjacent to $y$, call the vertex $a$.  
Label the path P, that is comprised of the two edges $\overline{xa}$ and $\overline{ay}$.  Notice $\overline{xa}\in C_2$ and $\overline{ay}\notin C_1\cup C_2$.  
Now the subgraph $K_{4,4}^-$ together with the path $P$ form a subdivision of $K_{4,4}$, i.e.~this can be viewed as $K_{4,4}$ where $P$ is one if the edges.  
By Lemma \ref{k44} for every embedding of $K_{4,4}$ each edge is contained in a 2-link, so $P$ is contained in a 2-link $C_3\cup C_4$.  
Without loss of generality, let $P$ be an edge of $C_3$.  
So $C_2\cap C_3=\overline{xa}$ is an arc,  $\omega(C_1,C_2)=1$ and $\omega(C_3,C_4)=1$.  Thus by Lemma \ref{3l} the embedding of  $K_{4,4}(G,v)$ contains a 3-link.  
\end{proof}

%%%%%%%%%%%%%%%%%%%%%%%%%%%%%%%%%%%%%%%%%%%%%%%%%%%%
\begin{figure}[h]
\begin{center}
\includegraphics[scale=0.5]{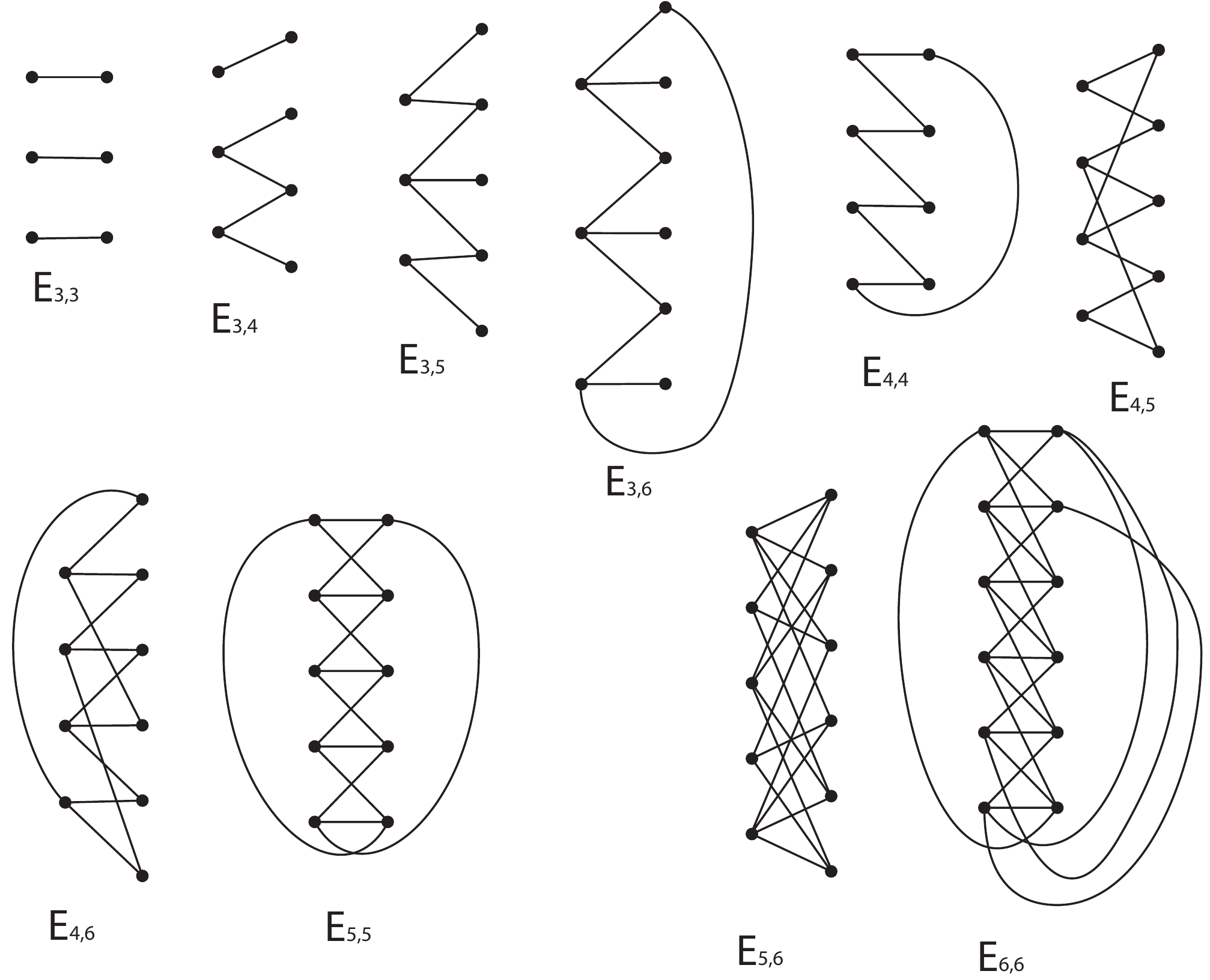}
\caption{Let $|A_1|=n$ and $|A_2|=m$.  This figure shows possible sets of added edges $E_{n,m}$ for
the vertex identification constructions for all different possible sizes of the vertex set $A_1$ and $A_2$.  
For each of them the vertex sets $A_1$ and $A_2$ are shown vertically and additional edges $E_{n,m}$ are shown.  } \label{addedge}
\end{center}
\end{figure}
%%%%%%%%%%%%%%%%%%%%%%%%%%%%%%%%%%%%%%%%%%%%%%%%%%%%

%\subsection*{Construction 4} Let $G_1$ and $G_2$ be graphs from the Petersen family.  Let $x_i$
%be a vertex of $G_i$ and let $A_i$ be the set of vertices adjacent
%to $x_i$, for $i\in\{1,2\}$. Let the \emph{vertex identification
%construction} be the construction where a graph $H$ is formed by
%adding edges between the sets of vertices $A_1$ and $A_2$, such
%that between every pair of vertices of $A_1$ and pair of vertices
%of $A_2$ there is at least one edge joining a vertex from $A_1$ to
%a vertex from $A_2$ and then identifying the vertices $x_1$ and
%$x_2$ to get a single vertex $x$.  Let the set of added edges
%between $A_1$ and $A_2$ be $E_{n,m}$ where $|A_1|=n$ and $|A_2|=m$. 

%The graph $G(2)$ can be thought of as the graph formed by doing the vertex
%identification construction with the valence 3 vertices of two
%$K_{4,4}^-$ graphs.  Recall, in \cite{FFNP},  they prove that $G(2)$ is minor minimal
%intrinsically 3-linked.  

Recall Construction 4, the vertex identification construction, where a set of edges $E_{n,m}$ is added between the two sets of vertices $A_1$ and $A_2$.  
For the full definition refer to Section \ref{back}.  
The graphs in the Petersen family have vertices of valence 3, 4, 5, and 6.  Let  $|A_1|=n$ and $|A_2|=m$.  We want to construct $E_{n,m}$, a set of edges between $A_1$ and $A_2$ such that given any pair of vertices from $A_1$ and any pair of vertices from $A_2$ there is an edge between two of the vertices from the chosen pair.  So each pair of vertices from $A_1$ must be connected to $m-1$ vertices from $A_2$.  To reduce the total number of edges needed we divide the edges evenly between the two vertices, so each vertex of $A_1$ is connected to $\frac{m-1}{2}$ vertices of $A_2$.   Suppose $m\geq n$, this gives a lower bound of $|E_{n,m}|=(\frac{m-1}{2})n$, if $m$ is odd, and $|E_{n,m}|=(\frac{m}{2})(n-1)+\frac{m-2}{2}$ if $m$ is even.  Figure \ref{addedge} shows possible sets of edges $E_{n,m}$ to be added
between $A_1$ and $A_2$ for all possible combinations of valence.  It can be checked that there sets of vertices satisfy the criterion.  However these are not the only possible $E_{n,m}$ sets, and it is not known if they are optimal.  In the case of $n=m=3$ then $|E_{3,3}|=3$ is the lower bound but in all other examples given $|E_{n,m}|$ is greater than the lower bound obtained.

\begin{prop}\label{VIdI3L}Any graph $H$ constructed through the vertex identification
construction of graphs $G_1$ and $G_2$ in the Petersen family is
intrinsically 3-linked.
\end{prop}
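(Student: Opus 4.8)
The plan is to mimic the structure of the proofs of Propositions \ref{starstar} and \ref{con2}, but now using Lemma \ref{3l} (the ``arc'' version) rather than the two-path or single-path lemmas. Fix an arbitrary embedding of $H$. Since $G_1$ lies in the Petersen family, the embedded copy of $G_1$ contains a 2-link $C_1\cup C_2$ whose two components together use every vertex of $G_1$; in particular the identified vertex $x$ lies in one of them, say $x\in C_2$, and $\w(C_1,C_2)=1$. Likewise the embedded $G_2$ contains a 2-link $C_3\cup C_4$ using all vertices of $G_2$, with $x\in C_3$ and $\w(C_3,C_4)=1$. Because $x$ has valence $2$ in each cycle, exactly two vertices of $A_1$ lie on $C_2$ — call them $p,q$ — and exactly two vertices of $A_2$ lie on $C_3$ — call them $r,s$.

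The crux is then a purely combinatorial observation about the edge set $E_{n,m}$: by the defining property of Construction 4, for the chosen pair $\{p,q\}\subseteq A_1$ and the chosen pair $\{r,s\}\subseteq A_2$ there is at least one edge of $E_{n,m}$ joining one of $p,q$ to one of $r,s$. Say this edge is $\overline{a b}$ with $a\in\{p,q\}\subseteq C_2$ and $b\in\{r,s\}\subseteq C_3$. Now I would build the arc of intersection between the two relevant closed curves. The edge $\overline{ab}$ together with the arc of $C_2$ running through $x$ from $a$ (not using the other vertex of $\{p,q\}$) and the arc of $C_3$ running through $x$ from $b$ gives a cycle; but more directly, since $a$ and $x$ both lie on $C_2$, the cycle $C_2$ is the union of two $a$–$x$ arcs, and similarly $C_3$ is the union of two $b$–$x$ arcs. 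Replacing one arc of $C_2$ and one arc of $C_3$ appropriately and inserting the edge $\overline{ab}$ produces a new simple closed curve $C_3'$ that meets $C_2$ in an arc containing $x$ (the sub-arc of $C_2$ from $a$ through $x$). One checks $\w(C_3',C_4)=\w(C_3,C_4)=1$ since $C_3'$ and $C_3$ differ by cycles disjoint from $C_4$, or simply take $C_3'$ to share with $C_3$ the arc on which the linking with $C_4$ was ``detected''; and $C_1,C_4$ remain disjoint from each other and from $C_2,C_3'$ because $G_1$ and $G_2$ meet only in $x$ and $x$ lies on the arc $C_2\cap C_3'$, while $C_1\subseteq G_1\smallsetminus\{$interior of that arc$\}$ and $C_4\subseteq G_2\smallsetminus\{\dots\}$. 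Then Lemma \ref{3l} applies to $C_1,C_2,C_3',C_4$ and yields a non-split $3$-component link, so $H$ is intrinsically 3-linked.

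The main obstacle is the bookkeeping in the previous paragraph: arranging the curves so that $C_2\cap C_3'$ is genuinely a single arc and not a more complicated intersection, and verifying that the two linking numbers survive the surgery. The key facts that make this work are that $G_1\cap G_2=\{x\}$ in $H$, that each of $C_2,C_3$ visits $x$ exactly once, that the chosen pairs of $A_i$-vertices on these cycles are forced to have a connecting edge in $E_{n,m}$, and that linking number mod $2$ is additive under the natural cycle surgeries. I would spell out the surgery explicitly: let $\alpha$ be the $a$–$x$ arc of $C_2$ that, when combined with $\overline{ab}$ and a suitable $b$–$x$ arc $\beta$ of $C_3$, gives a cycle; then set $C_3' = \alpha\cup\overline{ab}\cup\beta$ where $\beta$ is chosen as the $b$–$x$ arc of $C_3$ that, together with the complementary arc, detects the linking with $C_4$ (possible since $C_4$ is disjoint from $x$ and from all of $G_1$). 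With that choice $\w(C_3',C_4)=\w(C_3,C_4)=1$ and $C_2\cap C_3'=\alpha$ is an arc, so Lemma \ref{3l} finishes the proof.
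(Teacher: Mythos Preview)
Your overall setup matches the paper's: find $C_1\cup C_2$ in $G_1$ with $x\in C_2$, find $C_3\cup C_4$ in $G_2$ with $x\in C_3$, observe that two vertices of $A_1$ lie on $C_2$ and two of $A_2$ lie on $C_3$, and use the defining property of $E_{n,m}$ to obtain an edge $e=\overline{ab}$ joining these pairs. At this point the paper simply applies Lemma~\ref{3Lpath}: the curves $C_2$ and $C_3$ already intersect in exactly the single vertex $x$, and the edge $e$ is the required path $P$ from $a\in C_2\smallsetminus\{x\}$ to $b\in C_3\smallsetminus\{x\}$ with interior disjoint from every $C_i$. No surgery on the curves is needed.

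Your detour through Lemma~\ref{3l} can be made to work, but the justification you give for $\omega(C_3',C_4)=1$ is wrong. You write that $C_3'$ and $C_3$ ``differ by cycles disjoint from $C_4$''; however, disjoint cycles can link nontrivially, so this does not force $\omega(C_3'+C_3,C_4)=0$. The alternative phrasing about choosing the arc ``on which the linking with $C_4$ was detected'' is not meaningful either, since linking number is a global invariant of the pair and is not carried by one sub-arc. The correct repair is this: the two $b$--$x$ arcs $\beta,\beta'$ of $C_3$ yield cycles $C_3'=\alpha\cup\overline{ab}\cup\beta$ and $C_3''=\alpha\cup\overline{ab}\cup\beta'$ whose mod~$2$ sum is $C_3$, hence $\omega(C_3',C_4)+\omega(C_3'',C_4)=\omega(C_3,C_4)=1$, and exactly one of the two is odd; choose that one. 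With this fix your argument goes through, but it is strictly more work than invoking Lemma~\ref{3Lpath} directly.
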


\begin{proof}
Consider an arbitrary embedding of $H$.  Let the notation be as
in construction 4: the identified vertex is labelled $x$, the sets of vertices
adjacent to $x$ in the subgraphs $G_1$ and $G_2$, respectively,
are labelled $A_1$ and $A_2$, and the set of edges between $A_1$
and $A_2$ is $E_{n,m}$.  
Since every vertex of a Petersen graph is
contained in every link in the embedding, $x$ is in one of the
components of the link in each $G_i$. Let the components of the
link in $G_1$ be labelled $C_1$ and $C_2$, with the vertex $x$ in
the component $C_2$, and let the components of the link in $G_2$
be labelled $C_3$ and $C_4$, with the vertex $x$ in the component
$C_3$.  A pair of vertices from the set $V_1$ is also part of
$C_2$ and, similarly, a pair of vertices from the set $V_2$ is
also part of $C_3$.  By construction, there is an edge, $e$, of
the set $E_{n,m}$ between $C_2$ and $C_3$.  Since none of the edges of
$E_{n,m}$ are contained in $G_1$ or $G_2$, the interior of the edge
$e$ is disjoint from the links $C_1\cup C_2$ and $C_3\cup C_4$.
Thus, by Lemma \ref{3Lpath}, H contains a 3-link.
\end{proof}

\begin{prop}The graph $(G_1)^\equiv_=(G_2)$ obtained by Construction 5, is intrinsically 3-linked.  
\end{prop}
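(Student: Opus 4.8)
The plan is to follow the same template as the proofs of Propositions \ref{starstar}, \ref{con2}, and \ref{VIdI3L}: fix an arbitrary embedding of $(G_1)^\equiv_=(G_2)$, extract a $2$-link from each Petersen-family subgraph, and then locate two disjoint paths joining the appropriate components so that Lemma \ref{2path} applies.

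First I would fix an arbitrary embedding. Since $G_1\in\mathcal{PF}$, its image contains a $2$-link; label its components $C_1$ and $C_2$, so that $\omega(C_1,C_2)=1$. Likewise the image of $G_2$ contains a $2$-link with components $C_3$ and $C_4$, with $\omega(C_3,C_4)=1$. Because $G_1$ and $G_2$ are vertex-disjoint subgraphs of $(G_1)^\equiv_=(G_2)$, the four simple closed curves $C_1,C_2,C_3,C_4$ are mutually disjoint, which is the setting required by Lemma \ref{2path}.

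The key combinatorial step is a pigeonhole count. By the Observation, the components $C_1$ and $C_2$ together contain every vertex of $G_1$, and $C_3$ and $C_4$ together contain every vertex of $G_2$; in particular each has at least six vertices, so five disjoint edges between $V_1$ and $V_2$ can indeed be drawn. Hence each of the five added edges has one endpoint in $C_1$ or $C_2$ and the other endpoint in $C_3$ or $C_4$; that is, each added edge joins one of the four pairs $\{C_1,C_3\},\{C_1,C_4\},\{C_2,C_3\},\{C_2,C_4\}$. Since there are five edges and only four such pairs, two of the added edges join the same pair of components. Relabeling the components of each $2$-link if necessary — which does not disturb $\omega(C_1,C_2)=\omega(C_3,C_4)=1$, as the mod $2$ linking number is symmetric in its arguments — we may assume these two edges, call them $e$ and $e'$, each have one endpoint in $C_2$ and one in $C_3$. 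Because the five added edges are pairwise disjoint, $e$ and $e'$ are disjoint; and because neither $e$ nor $e'$ lies in $G_1$ or $G_2$, the interior of each is disjoint from $C_1\cup C_2\cup C_3\cup C_4$. Thus $e$ and $e'$ are two disjoint paths that begin in $C_2$ and end in $C_3$, so Lemma \ref{2path} yields a non-split $3$-component link. As the embedding was arbitrary, $(G_1)^\equiv_=(G_2)$ is intrinsically $3$-linked.

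There is essentially no serious obstacle here: the only point requiring care is the bookkeeping around the pigeonhole argument — verifying that "disjoint edges" really furnishes vertex-disjoint paths whose interiors avoid all four curves, and recording that every vertex of a Petersen-family graph lies on one of the two components of any $2$-link it carries — after which the conclusion is immediate from Lemma \ref{2path}.
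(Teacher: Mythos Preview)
Your proof is correct and follows essentially the same argument as the paper: extract a $2$-link from each $G_i$ whose components cover all vertices, pigeonhole the five added edges into the four component-pairs to find two edges joining the same pair, and apply Lemma \ref{2path}. You supply a bit more detail on disjointness and the relabeling step, but the approach is identical.
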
 
 
 \begin{proof}  Let $V_i$ be the vertex set of $G_i$, let the set of five added edges be $E$.  
Fix an embedding of $(G_1)^\equiv_=(G_2)$.  
Since $G_1,G_2\in \mathcal{PF}$, $G_1$ contains a 2-link that contains all of the vertices of $V_1$, call the link $C_1\cup C_2$, and  $G_2$ contains a 2-link that contains all of the vertices of $V_2$, call the link $C_3\cup C_4$. 
 Because all of the vertices are in one of the 2-links each edge of $E$ will go between components of the different 2-links.  
There are four different pairs of components that can be connected by the said edges, so by the pigeonhole principle two of the edges must go between the same pair of components.  
 By Lemma \ref{2path} the embedding contains a 3-link.  
 Thus $(G_1)^\equiv_=(G_2)$ is intrinsically 3-linked.  
 \end{proof}

%%%%%%%%%%%%%%%%%%%%%%%%%%%%%%%%%%%%%%
%%%%%%%%%%%%%%%%%%%%%%%%%%%%%%%%%%%%%
\section{Intrinsic 3-linkedness is not preserved by $\Y \nabla$ moves}\label{Ytri}

In this section, we show that intrinsic 3-linkedness is not preserved by $\Y \nabla$ moves.  

%%%%%%%%%%%%%%%%%%%%%%%%%%%%%%%%%%%%%%%%%%%%%%%%%%%%
\begin{figure}[h]
\begin{center}
\begin{picture}(260, 220)
\put(10,0){\includegraphics[scale=0.8]{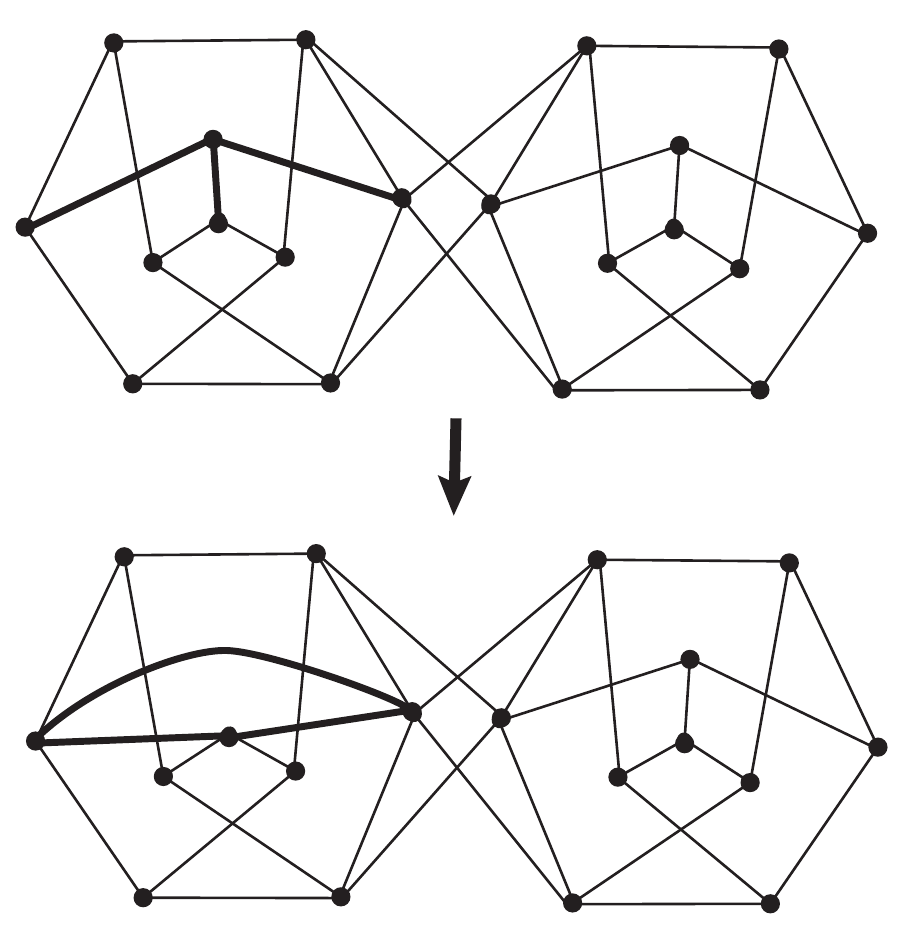}}
\put(200,140){$(PG)^*_*(PG)$}
\put(200,15){\bf$M$}
\end{picture}
\caption{The graph $M$ is obtained from $(PG)^*_*(PG)$ by a $\Y\nabla$ move on the indicated bold edges.  } \label{Ytri}
\end{center}
\end{figure}
%%%%%%%%%%%%%%%%%%%%%%%%%%%%%%%%%%%%%%%%%%%%%%%%%%%%

%%%%%%%%%%%%%%%%%%%%%%%%%%%%%%%%%%%%%%%%%%%%%%%%%%%%
\begin{figure}[h]
\begin{center}
\begin{picture}(260, 130)
\put(0,10){\includegraphics[scale=0.8]{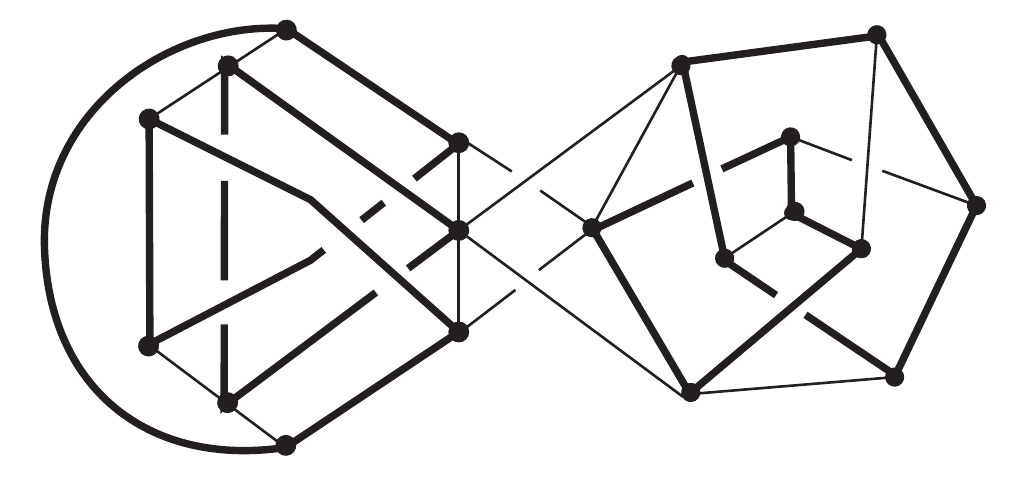}}
\put(25,88){\small\bf 1}
\put(45,94){\small\bf 2}
\put(63,104){\small\bf 3}
\put(107,90){\small\bf 4}
\put(107,72){\small\bf 5}
\put(107,50){\small\bf 6}
\put(65,7){\small\bf 7}
\put(45,20){\small\bf 8}
\put(25,35){\small\bf 9}
\put(150,108){\bf$a$}
\put(200,118){\bf$b$}
\put(133,72){\bf$c$}
\put(180,93){\bf$d$}
\put(230,72){\bf$e$}
\put(157,55){\bf$f$}
\put(187,75){\bf$g$}
\put(203,57){\bf$h$}
\put(150,23){\bf$j$}
\put(207,23){\bf$k$}
\end{picture}
\caption{The spatial graph $f(M)$.  An embedding of $M$ that does not contain a 3-link.  } \label{f(M)}
\end{center}
\end{figure}
%%%%%%%%%%%%%%%%%%%%%%%%%%%%%%%%%%%%%%%%%%%%%%%%%%%%
 
 \begin{thm} Intrinsic 3-linkedness is not preserved by $\Y \nabla$ moves.
 \end{thm}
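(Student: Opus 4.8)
The plan is to exhibit a specific embedding of the graph $M$ that contains no $3$-link, thereby witnessing the failure of preservation: since $(PG)^*_*(PG)$ is intrinsically $3$-linked by Proposition \ref{starstar} (it arises from Construction 1 with $G_1=G_2=PG$ and the vertices $v_1,v_2$ both being arbitrary vertices of $PG$), and $M$ is obtained from it by a single $\Y\nabla$ move on the indicated bold edges, producing an embedding of $M$ with no non-split $3$-component link suffices to prove the theorem. First I would fix the concrete embedding $f(M)$ drawn in Figure \ref{f(M)}, reading off its vertex set $\{1,\dots,9,a,b,c,d,e,f,g,h,j,k\}$ and its edge set from the picture, and carefully record the over/under information at each crossing so that linking numbers can be computed unambiguously.

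The core of the argument is then a finite but substantial case analysis. A non-split $3$-component link in $f(M)$ would consist of three mutually disjoint cycles $L_1,L_2,L_3$ in $M$ such that the link they form is non-split; a necessary condition is that each pair has an obstruction to being split, and working mod $2$ it is convenient first to note that for \emph{any} partition into three disjoint cycles at least one pair must be linked if the configuration is to be non-split, and in fact for a genuine $3$-link every ``cut'' into one cycle versus the other two must be essential. I would organize the search by how the nine ``$PG$-side'' vertices and the ten remaining vertices can be distributed among three disjoint cycles, using the structural constraints of $M$ (which vertices have low valence, which edges are bridges between the two former Petersen pieces, and how the $\Y\nabla$ move replaced the degree-$3$ vertex by a triangle). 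For each combinatorial type of triple of disjoint cycles that is not immediately ruled out by a counting or connectivity argument, I would compute the three pairwise mod-$2$ linking numbers $\w(L_i,L_j)$ from the crossing data of $f(M)$ and check that in every case at least one of the three pairs bounds a separating sphere — either because a pairwise linking number vanishes in a way that a simple isotopy makes geometrically split, or because one component can be pushed off to one side of a disk.

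The main obstacle I expect is the sheer combinatorial bookkeeping: $M$ has $19$ vertices and enough edges that the number of triples of disjoint cycles, even after imposing valence and connectivity restrictions, is large, and for the argument to be airtight one must be sure no triple has been overlooked and that ``linking number zero in this embedding'' genuinely upgrades to ``splittable'' rather than merely ``not detected by the abelian invariant.'' To control the first issue I would lean heavily on the rigidity of the Petersen graph: any disjoint pair of cycles inside a $PG$-subgraph must use all nine of its vertices (the Observation), which drastically limits how cycles can meet the two $PG$-pieces, and the five (or however many) connecting edges introduced by Construction 1 together with the triangle from the $\Y\nabla$ move are the only ``crossing'' edges, so a putative $3$-link must route at least two disjoint paths through this narrow bottleneck; showing the bottleneck is too narrow — that one cannot simultaneously realize the two disjoint connecting paths required by Lemma \ref{2path}-type obstructions without forcing one pair of components to be unlinked and isotopically split in $f(M)$ — is the crux. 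To control the second issue I would, for each surviving case, not merely cite a vanishing linking number but describe an explicit ambient isotopy (or a sphere in the complement) separating the offending component, so that non-splittability genuinely fails. Assembling these pieces yields an embedding of $M$ with no $3$-link, completing the proof that intrinsic $3$-linkedness is not preserved by $\Y\nabla$ moves.
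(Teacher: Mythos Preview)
Your overall strategy is the paper's: $(PG)^*_*(PG)$ is intrinsically $3$-linked by Proposition~\ref{starstar}, $M$ is obtained from it by a single $\Y\nabla$ move, and one exhibits the specific embedding $f(M)$ of Figure~\ref{f(M)} containing no $3$-link. Where you diverge is in the verification that $f(M)$ has no $3$-link. You propose an exhaustive enumeration of triples of disjoint cycles followed by linking-number computations and, where needed, explicit separating spheres; this would work, but it is far heavier than necessary.

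The paper sidesteps the case analysis entirely with three structural observations about $f(M)$. First, writing $K_1$ for the embedded subgraph on $\{1,\dots,9\}$ and $K_2$ for that on $\{a,\dots,k\}$, neither block contains three disjoint cycles by itself, and the two blocks sit unlinked from one another; hence any $3$-link must use at least two of the connecting edges. Second, each $K_i$ contains exactly one linked pair of cycles, and \emph{every other cycle in $K_i$ bounds a disk whose interior misses the graph}. This disk-bounding statement is the key geometric input: it is strictly stronger than ``$\omega=0$'' and dispatches in one stroke the concern you raise about linking number zero not implying splittability. Third, no pair of connecting edges has both endpoints on the distinguished linked cycles of $K_1$ and $K_2$. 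These three facts together force any candidate triple to contain a component bounding a disk in the complement of the others, so no non-split $3$-link exists.

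What the paper's route buys is economy: a handful of checkable geometric facts about a carefully chosen embedding replace the ``substantial case analysis'' you anticipate. What your route would buy is robustness---it does not depend on spotting the disk-bounding shortcut---but at the cost of the bookkeeping you correctly identify as the main obstacle.
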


\begin{proof} 
We begin with $(PG)^*_*(PG)$, since all of the vertices of $PG$ are equivalent there is a single graph that can be obtained throughout the double star construction with two Petersen graphs.   
By Proposition \ref{starstar}, $(PG)^*_*(PG)$ is intrinsically 3-linked.  
Let the graph obtained by a $\Y\nabla$ move on $(PG)^*_*(PG)$ as indicated in Figure \ref{Ytri} be called $M$.  

We claim that, the graph $M$ is not intrinsically 3-linked.
Consider the embedding $f(M)$ shown in Figure \ref{f(M)}.  
Let the vertices be labelled as indicated.  
Let $K_1$ be the embedded subgraph defined by the vertices 1, 2, 3, 4, 5, 6, 7, 8, 9 and the edges between them in $f(M)$, and let $K_2$ be the embedded subgraph defined by the vertices $a, b, c, d, e, f, g, h, j, k$ and the edges between them.  
For $f(M)$ to contain a 3-link, the 3-link must be in both the embedded subgraphs $K_1$ and $K_2$, since neither contains three disjoint simple closed curves on their own.  
Since $K_1$ and $K_2$ are disjoint and there is no linking between them, two of the edges joining the subgraphs must also be in the 3-link.  
The subgraph $K_1$ contains a single linked pair of cycles, indicated with thickened edges.  
Similarly, in $K_2$ there is a single linked pair of cycles, indicated with thickened edges.  
All other cycles in the subgraphs $K_1$ and $K_2$ bound disks that do not intersect the graph in their interiors.  
No pair of edges between the two subgraphs $K_1$ and $K_2$ connects two of the linked cycles.  
So there is no 3-link in $f(M)$.   
\end{proof}

Notice that there are many graphs that can be constructed with the double star construction that are a $\Y\nabla$ move away from a graph that is not intrinsically 3-linked.  
Consider $(PG)^*_*G$ for any $G\in\mathcal{PF}$, a similar $\Y\nabla$ move to that is the proof above will produce a graph that is not intrinsically 3-linked.  
More generally, this can be done with any double star construction where the vertex of $A_i$ that is not connected to the vertex $v_j$ is trivalent.  

\newpage


\begin{thebibliography}{mmI3L}

\bibitem{BF}  G. Bowlin and J. Foisy, {\it Some new intrinsically 3-linked
graphs}, J. of Knot Theory Ramifications {\bf13}(8) (2004),
1021--1027.

\bibitem{BDLST}  A. Brouwer, R. Davis, A. Larkin, D. Studenmund, C. Tucker, {\it Intrinsically $S^1$ 3-linked
graphs and other aspects of $S^1$ embeddings}, Rose-Hulman Undergrad. Math. J. {\bf 8} (2007).


\bibitem{CG} J. Conway and C. Gordan, {\it
Knots and links in spatial graphs}, J. of Graph Theory {\bf 7}
(1983), 445--453.

\bibitem{FFNP} E. Flapan, J. Foisy, R. Naimi, and J. Pommersheim,
{\it Intrinsically n-linked graphs}, J. of Knot Theory
Ramifications {\bf 10}(8) (2001), 1143--1154.

\bibitem{FN} E. Flapan, and R. Naimi, {\it The Y-triangle move does not preserve intrinsic knottedness}, Osaka J. Math. {\bf 45} (2008) 107-111.  

\bibitem{FNP} E. Flapan, R. Naimi, and J. Pommersheim, {\it
Intrinsically triple linked complete graphs}, Topol. Appl. {\bf
115}
 (2001), 239--246.

\bibitem{MRS} R. Motwani, A. Raghunathan and H. Saran, {\it
Constructive results for graph minors:  Linkless embeddings},  29th Annual 
Symposium on Foundations of Computer Science, IEEE (1988), 398--409.


\bibitem{RST} N. Robertson, P. Seymour, and R. Thomas, {\it Sachs'
 linkless embedding conjecture}, J. of Combinatorial Theory, Series B
 {\bf64} (1995), 185--227.

\bibitem{Sac} H. Sachs, {\it On a spatial analogue of Kuratowski's
Theorem on planar graphs -- an open problem}, Graph Theory,
Lag$\acute{o}$w, 1981, Lecture Notes in Mathematics, Vol. 1018
(Springer-Verlag, Berlin, Heidelberg, 1983), 649--662.

\bibitem{Sa} H. Sachs, {\it On spatial representations of finite graphs},
 Colloq. Math. Soc. J\'{a}nos Bolyai, Vol. 37 (North-Holland, Budapest, 1984), 649--662.

\end{thebibliography}
\end{document}